\newtheorem{thm}{Theorem}[section]
\newtheorem*{thm*}{Theorem}
\newtheorem{cor}[thm]{Corollary}
\newtheorem{lem}[thm]{Lemma}
\newtheorem{prop}[thm]{Proposition}
\theoremstyle{definition}
\newtheorem{defn}[thm]{Definition}
\theoremstyle{remark}
\theoremstyle{example}
\theoremstyle{conjecture}
\numberwithin{equation}{section}
\newcommand{\calC}{\mathcal C}
\newcommand{\calE}{\mathcal E}
\newcommand{\calH}{\mathcal H}
\newcommand{\calS}{\mathcal S}
\newcommand{\calJ}{\mathcal J}
\newcommand {\C} {\mathbb C}
\newcommand {\R} {\mathbb R}
\newcommand {\N} {\mathbb N}
\newcommand {\D} {\mathbb D}
\begin{document}

\title[Differential Operators]{Complex symmetry of first-order differential operators on Hardy space}%

\date{\today}%

\author{Pham Viet Hai}%
\address[P. V. Hai]{Faculty of Mathematics, Mechanics and Informatics, Vietnam National University, 334 Nguyen Trai, Thanh Xuan, Hanoi, Vietnam.}%
\email{phamviethai86@gmail.com}

\subjclass[2010]{47 B38, 47 E99}%

\keywords{Hardy space, differential operator, conjugation, complex symmetric operator, hermitian operator}%


\maketitle

\begin{abstract}
Given holomorphic functions $\psi_0$ and $\psi_1$, we consider first-order differential operators acting on Hardy space, generated by the formal differential expression $E(\psi_0,\psi_1)f(z)=\psi_0(z)f(z)+\psi_1(z)f'(z)$. We characterize these operators which are complex symmetric with respect to weighted composition conjugations. In parallel, as a basis of comparison, a characterization for differential operators which are hermitian is carried out. Especially, it is shown that hermitian differential operators are contained properly in the class of $\calC$-selfadjoint differential operators. The calculation of the point spectrum of some differential operators is performed in detail.
\end{abstract}

\section{Introduction}
\subsection{Complex symmetric operators}
Due to the applicability in various fields, the study of complex symmetric operators, initiated by Garcia and Putinar in \cite{GP1, GP2}, has attracted the attention of many researchers. The general works \cite{GP1, GP2} have since stimulated intensive research on complex symmetric operators. A number of other authors have recently made significant contributions to both theory and applications (see \cite{GPP}).

We pause for a moment to recall some terminologies. 
\begin{defn}
An unbounded linear operator $T$ is called \emph{$\calC$-symmetric} on a separable complex Hilbert space $\calH$ if there exists a conjugation $\calC$ (i.e. an anti-linear, isometric involution) such that
\begin{equation}\label{C-symmetry}
\langle \calC x,Ty\rangle=\langle \calC Tx,y\rangle,\quad\forall x,y\in\text{dom}(T).
\end{equation}
\noindent Note that for a densely defined operator $T$, its adjoint $T^*$ is well-defined, and so the identity \eqref{C-symmetry} means that $T\preceq \calC T^*\calC$. A densely defined, linear operator $S$ is called \emph{$\calC$-selfadjoint} if it satisfies $S=\calC S^*\calC$.
\end{defn}

Recently, complex symmetry has been investigated on Hilbert spaces of holomorphic function. The initial works \cite{GH,JKKL} were dedicated to bounded (weighted) composition operators acting on Hardy spaces, for a specific conjugation $\mathcal{J}f(z)=\overline{f(\overline{z})}$. The operator $\calJ$ inspired the authors \cite{HK1, HK2} to characterize its generalization, namely anti-linear weighted composition operator, which is a conjugation. With these conjugations (often called \emph{weighted composition conjugations}), the authors \cite{HK1, HK2} succeeded in characterizing bounded weighted composition operators which are complex symmetric on Fock spaces. 

\subsection{Differential operators}
Differential operators are important in a number of fields not only for their obvious applications but also as examples of linear operators. For instance, in operator theory, complex symmetric differential operators can carry a weak form of spectral decomposition theorem. This result was explored previously in the case of matrices by Takagi. (see \cite{GPP}). 

What make differential operators important in the theory of dynamical systems is the fact that they can generate $C_0$-semigroups on function spaces. In this field, we refer the reader to the paper \cite{HK4} for $C_0$-semigroups on Fock space, and to the papers \cite{avicou2015class, avicou2016analyticity} for $C_0$-semigroups on Hardy space.

Among differential operators, we can mention $\mathcal{PT}$-symmetric operators, first proposed in quantum mechanics by physicist Bender and former graduate student Boettcher \cite{bender1998real}. Roughly speaking, $\mathcal{PT}$-symmetric operators are those operators on Lebesgue space $L^2(\R)$ complex symmetric with respect to the conjugation
$$
\mathcal{PT}f(x)=\overline{f(-x)},\quad f\in L^2(\R).
$$
After the seminal work \cite{bender1998real}, there has been a rapidly growing interest in $\mathcal{PT}$-symmetric operators. Through a series of works, the study of $\mathcal{PT}$-symmetric operators has reached a certain state of advancement, which is well represented in the survey \cite{konotop2016nonlinear}.

Recently, the authors \cite{Hai2018} have discovered that the study of $\mathcal{PT}$-symmetric operators can be linked to the complex symmetry in Fock space. It turns out that $\mathcal{PT}$ is unitarily equivalent to $\mathcal{J}$ acting on Fock space (see \cite[Proposition 5.5]{Hai2018}). This result allows us to show that a maximal differential operator is $\mathcal{PT}$-selfadjoint on the Lebesgue space if and only if it is unitarily equivalent to a linear combination  with complex coefficients of
operators (acting on Fock space) of the form
$$ z^m [\frac{\partial}{\partial z}]^n + (-1)^{m+n}  z^n [\frac{\partial}{\partial z}]^m.$$
The linear combination is endowed with the maximal domain of definition.

In this paper, we concentrate on first-order differential operators acting on Hardy space. For this purpose, it is essential to recall the basic definitions. The Hardy space $H^2$ consists of the holomorphic functions in $\D$, whose mean square value on the circle of radius $r$ remains bounded as $r\to 1$ from below. On this space, we consider the formal differential expression of the form
$$
E(\psi_0,\psi_1)f(z)=\psi_0(z)f(z)+\psi_1(z)f'(z),\quad f\in H^2,
$$
where holomorphic functions $\psi_0,\psi_1:\D\to\C$ are called as the \emph{symbols}. We define the \emph{maximal differential operator} $\calE_{\max}$ as follows
$$
\text{dom}(\calE_{\max})=\{f\in H^2: E(\psi_0,\psi_1)f\in H^2\},\quad \calE_{\max}f=E(\psi_0,\psi_1)f.
$$
The operator $\calE_{\max}$ is ``maximal" in the sense that one cannot extend it as a linear operator in $H^2$ generated by $E(\psi_0,\psi_1)$. The operator $\calE$ is called a \emph{first-order differential operator} associated with the expression $E(\psi_0,\psi_1)$ if $\calE\preceq \calE_{\max}$, namely
$$
\text{dom}(\calE)\subseteq\text{dom}(\calE_{\max}),\quad \calE f=E(\psi_0,\psi_1)f.
$$

\subsection{Content}
This paper is interested in how the symmetry properties of the first-order differential operators affect the function theoretic properties of the symbols, and vice versa. In Theorems \ref{Cself-c1} and \ref{Cself-2}, we characterize \emph{maximal} differential operators which are $\calC$-selfadjoint on the Hardy space, with respect to weighted composition conjugations. A similar calculation is carried out for hermitian operators in Theorem \ref{selfadjoint}. Meanwhile, Theorems \ref{no-max-do}, \ref{no-max-do-c2} and \ref{no-max-do-self-her} are devoted to the study of differential operators with \emph{arbitrary} domains (not necessarily maximal). These results show that there is no nontrivial domain for a first-order differential operator $T$ on which $T$ is hermitian as well as $\calC$-selfadjoint with respect to some weighted composition conjugation. Especially, Corollary \ref{cor-her-cso} shows that hermitian differential operators are contained properly in the class of $\calC$-selfadjoint differential operators. The point spectrum of some differential operators is computed in detail.

\section*{Notations}
Throughout the paper, we let $\N$, $\R$, $\C$ denote the set of non-negative integers, the set of real numbers and the set of complex numbers, respectively. Let $\D=\{z\in\C:|z|<1\}$ and $\partial\D=\{z\in\C:|z|=1\}$. The domain of an unbounded operator is denoted as $\text{dom}(\cdot)$. For two unbounded operators $X,Y$, the notation $X\preceq Y$ means that $X$ is the \emph{restriction} of $Y$ on the domain $\text{dom}(X)$; namely
$$
\text{dom}(X)\subseteq\text{dom}(Y),\quad Xz=Yz,\,\forall z\in\text{dom}(X).
$$

\section{Preliminaries}
In this section, we gather some properties of the Hardy space, which are used in the later proofs. Given $f\in H^2$, the radial limit $\lim\limits_{r\to 1^{-}}f(re^{i\theta})$ always exists for almost every $\theta$ and we denote it as $f(e^{i\theta})$. It is well-known that $H^2$ is a reproducing kernel Hilbert space with the inner product
$$\langle f,g\rangle=\dfrac{1}{2\pi}\int\limits_0^{2\pi}f(e^{i\theta})\overline{g(e^{i\theta})}\,d\theta,$$
and with kernel functions
$$
K_z(u)=\dfrac{1}{1-\overline{z}u}.
$$
We denote
$$K_z^{(m)}(u)=\dfrac{\overline{z}^m m!}{(1-\overline{z}u)^{m+1}},\quad K_z^{[m]}(u)=\dfrac{u^m m!}{(1-\overline{z}u)^{m+1}}.
$$
Note that these functions satisfy
$$
K_z^{(m)}(u)=\dfrac{d^m K_z(u)}{du^m},\quad f^{(m)}(z)=\langle f,K_z^{[m]}\rangle,\quad\forall m\in\N,\forall f\in H^2,\forall z,u\in\D.
$$
Two classes of conjugations on the Hardy space $H^2$ were found in \cite{Hai2018-stolen}. The first class consists of conjugations defined by
$$
\calC_{\alpha,\beta}f(z)=\alpha\overline{f(\beta\overline{z})},\quad \alpha,\beta\in\partial\D,
$$
and the other one contains the following conjugations
$$
\calJ_{\beta,\lambda}f(z)=\dfrac{\beta\sqrt{1-|\lambda|^2}}{1-z\overline{\lambda}}\cdot\overline{f\left(\dfrac{\lambda}{\overline{\lambda}}\cdot\dfrac{\overline{\lambda}-\overline{z}}{1-\lambda\overline{z}}\right)},\quad\beta\in\partial\D,\lambda\in\D\setminus\{0\}.
$$
With the same techniques, the authors \cite{lim2018complex} reformulated these conjugations on weighted Hardy spaces.

\section{Some initial properties}
\subsection{Reproducing kernel algebra and closed graph}
For our main results, we need the following simple but basic observations.

The first observation is related to the action of the adjoint of a differential operator on the kernel functions, that is on the point evaluation functionals.
\begin{lem}\label{bcajkf}
For every $w\in\D$, $m\in\N$, we have $K_w^{[m]}\in\text{dom}(\calE^*)$, and moreover
\begin{eqnarray}\label{T*K_z}
\calE^*K_w^{[m]}
\nonumber&=&\overline{\psi_0^{(m)}(w)}K_w+\sum_{j=1}^m\left[ \binom{m}{j}\overline{\psi_0^{(m-j)}(w)}+\binom{m}{j-1}\overline{\psi_1^{(m-j+1)}(w)} \right]K_w^{[j]}\\
&&+\overline{\psi_1(w)}K_w^{[m+1]}.
\end{eqnarray}
\end{lem}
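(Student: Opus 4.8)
The plan is to exhibit, for each fixed $w\in\D$ and $m\in\N$, an explicit vector $g\in H^2$ for which the identity $\langle \calE f,K_w^{[m]}\rangle=\langle f,g\rangle$ holds for every $f\in\text{dom}(\calE)$. Since $\calE$ is densely defined, producing such a $g$ is precisely what it means for $K_w^{[m]}$ to lie in $\text{dom}(\calE^*)$, and $g$ is then forced to equal $\calE^*K_w^{[m]}$. The engine of the computation is the reproducing identity $h^{(m)}(w)=\langle h,K_w^{[m]}\rangle$ recalled in the Preliminaries, which converts the inner product against $K_w^{[m]}$ into an $m$-th derivative evaluated at $w$.

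Concretely, I would start from
\[
\langle \calE f,K_w^{[m]}\rangle=(\calE f)^{(m)}(w)=(\psi_0 f)^{(m)}(w)+(\psi_1 f')^{(m)}(w),
\]
using that $\calE f=\psi_0 f+\psi_1 f'$ is holomorphic on $\D$ and that $f\in\text{dom}(\calE)$ guarantees $\calE f\in H^2$. Applying the Leibniz rule to each term gives
\[
(\psi_0 f)^{(m)}(w)=\sum_{k=0}^m\binom{m}{k}\psi_0^{(m-k)}(w)f^{(k)}(w),\qquad (\psi_1 f')^{(m)}(w)=\sum_{k=0}^m\binom{m}{k}\psi_1^{(m-k)}(w)f^{(k+1)}(w).
\]
Now I would run the reproducing identity in reverse, replacing each $f^{(k)}(w)$ by $\langle f,K_w^{[k]}\rangle$ and each $f^{(k+1)}(w)$ by $\langle f,K_w^{[k+1]}\rangle$, and pulling the (real) binomial coefficients together with the scalars $\psi_0^{(m-k)}(w)$ and $\psi_1^{(m-k)}(w)$ into the second slot of the inner product, which conjugates them. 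Because the sums are finite, this interchange is just conjugate-linearity of $\langle\cdot,\cdot\rangle$ in its second argument.

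This presents $\langle\calE f,K_w^{[m]}\rangle$ as $\langle f,g\rangle$ with
\[
g=\sum_{k=0}^m\binom{m}{k}\overline{\psi_0^{(m-k)}(w)}K_w^{[k]}+\sum_{k=0}^m\binom{m}{k}\overline{\psi_1^{(m-k)}(w)}K_w^{[k+1]},
\]
a finite combination of the functions $K_w^{[0]},\dots,K_w^{[m+1]}$, each of which lies in $H^2$; hence $g\in H^2$. The only remaining task is bookkeeping: peel off the $k=0$ term of the first sum (which is $\overline{\psi_0^{(m)}(w)}K_w$), reindex the second sum by $j=k+1$ so that it becomes $\sum_{j=1}^{m+1}\binom{m}{j-1}\overline{\psi_1^{(m-j+1)}(w)}K_w^{[j]}$, peel off its $j=m+1$ term (which is $\overline{\psi_1(w)}K_w^{[m+1]}$), and merge the overlapping range $1\le j\le m$ to obtain the bracketed coefficients of $K_w^{[j]}$ in \eqref{T*K_z}. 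This matches the claimed formula exactly and identifies $\calE^*K_w^{[m]}=g$.

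I do not anticipate a genuine obstacle; the content is the reproducing property plus the Leibniz rule. The points demanding the most care are clerical: tracking the complex conjugation of the derivative values as the scalars cross the inner product, and getting the index shift $k\mapsto j=k+1$ right so that the two sums align into the single bracketed coefficient. One structural remark worth making is that we use $f\in\text{dom}(\calE)$ only to ensure $\calE f\in H^2$ (so that the reproducing identity applies to $\calE f$), together with the density of $\text{dom}(\calE)$ (so that the representing vector $g$ is unique and may legitimately be called $\calE^*K_w^{[m]}$); no boundedness or closedness of $\calE$ is invoked.
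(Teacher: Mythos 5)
Your proposal is correct, and it reaches \eqref{T*K_z} by a genuinely different route than the paper. The paper proves the lemma by induction on $m$: the base case $m=0$ is the computation $\langle \calE g,K_w\rangle=(\calE g)(w)=\psi_0(w)g(w)+\psi_1(w)g'(w)$, and the inductive step differentiates the identity $(\calE g)^{(\ell)}(w)=\langle g,\calE^*K_w^{[\ell]}\rangle$ with respect to $w$, so that the binomial coefficients in the bracket are generated recursively (Pascal's rule is absorbed into the differentiation step). You instead give a one-shot closed-form computation: apply the reproducing identity to $\calE f\in H^2$, expand $(\psi_0 f)^{(m)}(w)$ and $(\psi_1 f')^{(m)}(w)$ by the Leibniz rule, and convert each $f^{(k)}(w)$ back to $\langle f,K_w^{[k]}\rangle$ by conjugate-linearity; the bracketed coefficient of $K_w^{[j]}$ then appears by merging the two Leibniz sums after the shift $j=k+1$. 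Both proofs rest on the same two facts (the derivative-reproducing identity and conjugate-linearity of the inner product), so the difference is organizational, but it is a real one: your version is shorter and makes the structure of the coefficients transparent as merged Leibniz coefficients, at the cost of the reindexing bookkeeping, while the paper's induction trades that bookkeeping for an incremental derivation that never invokes the general Leibniz formula. Two further points in your write-up are worth keeping: you correctly identify that membership $K_w^{[m]}\in\text{dom}(\calE^*)$ is exactly the existence of a representing vector $g\in H^2$ (which your finite linear combination of the kernels $K_w^{[0]},\dots,K_w^{[m+1]}$ supplies), and you make explicit the role of density of $\text{dom}(\calE)$ in guaranteeing that $g$ is unique and hence equal to $\calE^*K_w^{[m]}$ --- a hypothesis the paper's proof uses silently.
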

\begin{proof}
We use a proof by induction on $m$. For any $g\in\text{dom}(\calE)$, we have
\begin{eqnarray*}
\langle \calE g,K_w\rangle &=& (\calE g)(w)=\psi_0(w)g(w)+\psi_1(w)g'(w)=\langle g,\overline{\psi_0(w)}K_w+\overline{\psi_1(w)}K_w^{[1]}\rangle,
\end{eqnarray*}
which gives \eqref{T*K_z} when $m=0$. Suppose that \eqref{T*K_z} holds for $m=\ell$, and hence
\begin{eqnarray*}
\langle g,\calE^*K_w^{[\ell]}\rangle 
&=&\psi_0^{(\ell)}(w)g(w)+\sum_{j=1}^\ell\left[ \binom{\ell}{j}\psi_0^{(\ell-j)}(w)+\binom{\ell}{j-1}\psi_1^{(\ell-j+1)}(w) \right]g^{(j)}(w)\\
&&+\psi_1(w)g^{(\ell+1)}(w).
\end{eqnarray*}
Since $\langle g,\calE^*K_w^{[\ell]}\rangle=\langle \calE g,K_w^{[\ell]}\rangle=(\calE g)^{(\ell)}(w)$, we have
\begin{eqnarray*}
(\calE g)^{(\ell)}(w)
&=&\psi_0^{(\ell)}(w)g(w)+\sum_{j=1}^\ell\left[ \binom{\ell}{j}\psi_0^{(\ell-j)}(w)+\binom{\ell}{j-1}\psi_1^{(\ell-j+1)}(w) \right]g^{(j)}(w)\\
&&+\psi_1(w)g^{(\ell+1)}(w).
\end{eqnarray*}
Differentiating both sides gives
\begin{eqnarray*}
(\calE g)^{(\ell+1)}(w)
&=&\psi_0^{(\ell+1)}(w)g(w)+\sum_{j=1}^{\ell+1}\left[ \binom{\ell+1}{j}\psi_0^{(\ell+1-j)}(w)+\binom{\ell+1}{j-1}\psi_1^{(\ell-j+2)}(w) \right]g^{(j)}(w)\\
&&+\psi_1(w)g^{(\ell+2)}(w),
\end{eqnarray*}
which implies, as $(\calE g)^{(\ell+1)}(w)=\langle \calE g,K_w^{[\ell+1]}\rangle$, that
\begin{eqnarray*}
\langle \calE g,K_w^{[\ell+1]}\rangle
&=&\langle g,\overline{\psi_0^{(\ell+1)}(w)}K_w+\sum_{j=1}^{\ell+1}\left[ \binom{\ell+1}{j}\overline{\psi_0^{(\ell+1-j)}(w)}+\binom{\ell+1}{j-1}\overline{\psi_1^{(\ell-j+2)}(w)} \right]K_w^{[j]}\\
&&+\overline{\psi_1(w)}K_w^{[\ell+2]}\rangle.
\end{eqnarray*}
The above identity shows $K_w^{[\ell+1]}\in\text{dom}(\calE^*)$ and \eqref{T*K_z} holds for $m=\ell+1$.
\end{proof}

The next observation shows that a maximal differential operator is always closed.
\begin{prop}\label{T-closed}
The operator $\calE_{\max}$ is always closed.
\end{prop}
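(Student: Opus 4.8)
The plan is to show that $\calE_{\max}$ is closed by the standard reproducing-kernel-Hilbert-space argument: pointwise evaluation (and evaluation of derivatives) is continuous on $H^2$, so norm convergence forces pointwise convergence of a sequence and its image, and the limit lands back in the domain. Concretely, suppose $f_n\in\text{dom}(\calE_{\max})$ with $f_n\to f$ in $H^2$ and $\calE_{\max}f_n\to g$ in $H^2$; I must produce $f\in\text{dom}(\calE_{\max})$ with $\calE_{\max}f=g$, i.e. $f\in H^2$ (already given) and $E(\psi_0,\psi_1)f=g\in H^2$.

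\textbf{Key steps.} First I would invoke the reproducing property recorded in the Preliminaries: for each fixed $z\in\D$ and each $m\in\N$ one has $f^{(m)}(z)=\langle f,K_z^{[m]}\rangle$, so the functionals $f\mapsto f(z)$ and $f\mapsto f'(z)$ are bounded on $H^2$. Hence $f_n\to f$ in norm gives $f_n(z)\to f(z)$ and $f_n'(z)\to f'(z)$ for every $z\in\D$, and similarly $\calE_{\max}f_n\to g$ in norm gives $(\calE_{\max}f_n)(z)\to g(z)$ for every $z$. Second, I would write out the pointwise identity on the converging sequence and pass to the limit:
\begin{equation*}
(\calE_{\max}f_n)(z)=\psi_0(z)f_n(z)+\psi_1(z)f_n'(z)\longrightarrow \psi_0(z)f(z)+\psi_1(z)f'(z),
\end{equation*}
where the convergence holds for each fixed $z\in\D$ because $\psi_0(z),\psi_1(z)$ are fixed scalars. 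Comparing with $(\calE_{\max}f_n)(z)\to g(z)$, the uniqueness of limits yields $E(\psi_0,\psi_1)f(z)=g(z)$ for all $z\in\D$.

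\textbf{Conclusion and the delicate point.} Since $g\in H^2$ and $E(\psi_0,\psi_1)f=g$ holds identically on $\D$, we conclude $E(\psi_0,\psi_1)f\in H^2$, which is exactly the membership condition $f\in\text{dom}(\calE_{\max})$, and then $\calE_{\max}f=E(\psi_0,\psi_1)f=g$. This closes the graph. The part needing the most care is justifying that norm convergence in $H^2$ transfers to pointwise convergence of \emph{both} $f_n$ and its derivative $f_n'$; this is precisely where the reproducing kernels $K_z$ and $K_z^{[1]}$ enter, via $|f_n(z)-f(z)|=|\langle f_n-f,K_z\rangle|\le\norm{f_n-f}\,\norm{K_z}$ and the analogous bound with $K_z^{[1]}$ for derivatives. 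Once continuity of these evaluation functionals is in hand, the rest is a routine limit comparison, so I do not anticipate a substantive obstacle beyond recording these two boundedness estimates cleanly.
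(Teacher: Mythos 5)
Your proposal is correct and follows essentially the same argument as the paper: both use the boundedness of the evaluation functionals $f\mapsto\langle f,K_z\rangle$ and $f\mapsto\langle f,K_z^{[1]}\rangle$ to convert norm convergence of $f_n$ and $\calE_{\max}f_n$ into pointwise convergence of $f_n$, $f_n'$, and $\calE_{\max}f_n$, then pass to the limit in the identity $\calE_{\max}f_n(z)=\psi_0(z)f_n(z)+\psi_1(z)f_n'(z)$ and use $g\in H^2$ to conclude $f\in\text{dom}(\calE_{\max})$ with $\calE_{\max}f=g$. The only difference is that you spell out the estimate $|f_n(z)-f(z)|\le\norm{f_n-f}\,\norm{K_z}$ explicitly, which the paper leaves implicit.
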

\begin{proof}
Let $(f_n)_{n\in\N}\subset\text{dom}(\calE_{\max})$ and $f,g\in H^2$, such that
$$
f_n\to f \quad\text{and}\quad \calE_{\max}f_n\to g\quad\hbox{in $H^2$},
$$
which imply, as $f_n(z)=\langle f_n,K_z\rangle$ and $f_n'(z)=\langle f_n,K_z^{[1]}\rangle$, that
$$
f_n(z)\to f(z),\quad f_n'(z)\to f'(z) \quad\text{and}\quad \calE_{\max}f_n(z)\to g(z),\quad\forall z\in\D.
$$
Letting $n\to\infty$ in the identity $\calE_{\max}f_n(z)=\psi_0(z)f_n(z)+\psi_1(z)f_n'(z)$ gives
$$
\psi_0(z)f(z)+\psi_1(z)f'(z)=g(z),\quad\forall z\in \D.
$$
Since $g\in H^2$, we conclude that $f\in\text{dom}(\calE_{\max})$, and furthermore $\calE_{\max}f=g$.
\end{proof}

\subsection{Adjoints}
As it will be seen in the next section, for a $\calC$-selfadjoint differential operator,  all its symbols have to be polynomials. It is essential to explore the adjoint of a differential operator when the symbols are polynomials.
\begin{thm}\label{adjoint-form}
Let $\calE_{\max}$ be a maximal differential operator, induced by the symbols
$$
\psi_0(z)=a+zb,\quad\psi_1(z)=d+zc+z^2b,\quad z\in\D,
$$
where $a,b,c\in\C$. Then we always have $\calE_{\max}^*=\calS_{\max}$, where $\calS_{\max}$ is the maximal differential operator induced by the symbols
$$
\widehat{\psi_0}(z)=\overline{a}+z\overline{d},\quad \widehat{\psi_1}(z)=\overline{b}+z\overline{c}+z^2\overline{d},\quad z\in\D.
$$
\end{thm}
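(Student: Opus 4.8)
The plan is to compare the maximal operator $\calE_{\max}$ with the restriction $\calE_0:=\calE_{\max}|_{\mathcal{P}}$ to the (dense) subspace $\mathcal{P}$ of polynomials, and to deduce the theorem from two facts: (i) $\calE_0^*=\calS_{\max}$, and (ii) $\mathcal{P}$ is a core for $\calE_{\max}$, i.e.\ $\overline{\calE_0}=\calE_{\max}$. Granting these, since passing to the graph-closure does not change the adjoint of a densely defined operator, one gets $\calE_{\max}^*=(\overline{\calE_0})^*=\calE_0^*=\calS_{\max}$, which is the assertion.

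For (i), write $E=E(\psi_0,\psi_1)$ and expand on monomials: using $\psi_0=a+bz$, $\psi_1=d+cz+bz^2$,
\[
Ez^n=(a+cn)z^n+b(n+1)z^{n+1}+dn\,z^{n-1}.
\]
Hence for $g=\sum_k g_kz^k\in H^2$ we have $\langle Ez^n,g\rangle=(a+cn)\overline{g_n}+b(n+1)\overline{g_{n+1}}+dn\,\overline{g_{n-1}}$, whose complex conjugate is \emph{exactly} the $n$-th Taylor coefficient of $E(\widehat{\psi_0},\widehat{\psi_1})g$; this bookkeeping explains the particular shape of $\widehat{\psi_0},\widehat{\psi_1}$ (and is consistent with Lemma \ref{bcajkf} on kernels). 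Thus for a polynomial $p=\sum_n p_nz^n$ one has $\langle Ep,g\rangle=\sum_n p_n\,\overline{(E(\widehat{\psi_0},\widehat{\psi_1})g)_n}$. Since $\mathcal{P}$ is dense in $H^2$, the functional $p\mapsto\langle Ep,g\rangle$ is $H^2$-bounded if and only if the coefficient sequence $\big((E(\widehat{\psi_0},\widehat{\psi_1})g)_n\big)_n$ lies in $\ell^2$, i.e.\ if and only if $E(\widehat{\psi_0},\widehat{\psi_1})g\in H^2$, i.e.\ $g\in\text{dom}(\calS_{\max})$; and then the representing vector is $E(\widehat{\psi_0},\widehat{\psi_1})g=\calS_{\max}g$. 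This is precisely $\calE_0^*=\calS_{\max}$.

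Step (ii) is the heart of the matter. I will approximate an arbitrary $f\in\text{dom}(\calE_{\max})$ in the graph norm by the dilations $f_r(z):=f(rz)$, $r\uparrow1$, followed by Taylor polynomials of each $f_r$ (the latter step is routine, since $f_r,f_r'\in H^2$ with $H^2$-convergent Taylor series). As $f_r\to f$ in $H^2$ automatically, everything reduces to $Ef_r\to Ef$ in $H^2$. Writing
\[
Ef_r-(Ef)(r\,\cdot)=b(1-r)z\,f(rz)+(1-r)(-d+brz^2)f'(rz)
\]
and noting $(Ef)(r\,\cdot)\to Ef$ in $H^2$, the first summand has norm $O(1-r)$, so only the second is at issue. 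Here the defining property of the maximal domain enters: $\psi_1 f'\in H^2$, so $P_r:=\psi_1(rz)f'(rz)=(\psi_1 f')(r\,\cdot)\to\psi_1 f'$ in $H^2$. Factoring the second summand as $M_rP_r$ with multiplier $M_r(z)=(1-r)\,(-d+brz^2)/\psi_1(rz)$, the goal becomes $\|M_rP_r\|_{H^2}\to0$.

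The main obstacle is the uniform estimate on $M_r$; once it is in hand the conclusion is soft. I expect to prove that $\sup_{0<r<1}\|M_r\|_{L^\infty(\partial\D)}<\infty$ and $M_r\to0$ a.e.\ on $\partial\D$. Away from the boundary zeros of $\psi_1$ this is immediate from the factor $(1-r)$; the delicate points are the zeros $z_*\in\partial\D$ of $\psi_1$, where $|\psi_1(rz)|$ is small of order $(1-r)^2+(\theta-\theta_*)^2$. The key algebraic fact is that there the numerator $-d+brz^2$ vanishes to precisely the compensating order (for a double boundary root one uses $d=bz_*^2$, so $-d+brz^2=b(rz^2-z_*^2)$), yielding an $r$-independent bound. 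Granting this, split $\|M_rP_r\|\le\|M_r\|_\infty\,\|P_r-\psi_1 f'\|+\|M_r(\psi_1 f')\|$: the first term tends to $0$ because $P_r\to\psi_1 f'$, and the second by dominated convergence, since $M_r\to0$ a.e.\ and $|M_r\,\psi_1 f'|\le C|\psi_1 f'|\in L^2(\partial\D)$. This gives $Ef_r\to Ef$, hence (ii); together with Proposition \ref{T-closed}, which forces $\overline{\calE_0}\subseteq\calE_{\max}$, we obtain $\overline{\calE_0}=\calE_{\max}$ and therefore $\calE_{\max}^*=\calS_{\max}$.
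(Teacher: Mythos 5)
Your proposal is correct, but it takes a genuinely different route from the paper. The paper proves the two inclusions directly: $\calE_{\max}^*\preceq\calS_{\max}$ by testing against reproducing kernels (writing $\calE_{\max}K_z$ as a combination of $K_z$ and $K_z^{[1]}$, so that $\calE_{\max}^*f(z)=\langle f,\calE_{\max}K_z\rangle=\widehat{\psi_0}(z)f(z)+\widehat{\psi_1}(z)f'(z)$ for every $f\in\text{dom}(\calE_{\max}^*)$), and $\calS_{\max}\preceq\calE_{\max}^*$ by integrating $\int_{-\pi}^{\pi}\calE_{\max}f(e^{i\theta})\overline{g(e^{i\theta})}\,d\theta$ by parts on the boundary circle. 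You instead compute the adjoint of the restriction $\calE_0=\calE_{\max}|_{\mathcal{P}}$ to polynomials by Taylor-coefficient bookkeeping (your formula for $Ez^n$ and its matching with the coefficients of $E(\widehat{\psi_0},\widehat{\psi_1})g$ are correct, and they identify both $\text{dom}(\calE_0^*)$ and its action as $\calS_{\max}$), and then prove the core property $\overline{\calE_0}=\calE_{\max}$ by dilations. The trade-off: the paper's argument is much shorter, though its integration by parts is carried out formally on boundary values; your argument avoids boundary manipulations and yields a byproduct of independent interest (the polynomials form a core, i.e.\ $\calE_{\max}$ is the closure of the minimal operator), but hinges on the multiplier estimate $\sup_{r}\|M_r\|_{L^\infty(\partial\D)}<\infty$ together with $M_r\to0$ a.e., which you leave partly as an expectation. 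That estimate is in fact true in every case, and your indicated algebra completes it: when $b\neq0$, writing $\psi_1(w)=b(w-w_1)(w-w_2)$ gives $-d+brz^2=b(rz^2-w_1w_2)$ and the decomposition $rz^2-w_1w_2=z(rz-w_1)+w_1(rz-w_2)+w_1(1-r)z$, while $|rz-w_i|\gtrsim 1-r$ for each root (bounded below by a constant when $|w_i|<1$, and $\ge 1-r$ when $|w_i|\ge1$), so $|M_r|$ is uniformly bounded; the degenerate cases $b=0$ are immediate. One small correction to your wording: for a \emph{simple} boundary root the numerator need not vanish at all --- the compensation there comes solely from the prefactor $(1-r)$ against a denominator vanishing to first order; only for a \emph{double} boundary root is the cancellation $d=bz_*^2$ actually needed. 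With that estimate supplied, your dominated-convergence step and the final assembly $\calE_{\max}^*=(\overline{\calE_0})^*=\calE_0^*=\calS_{\max}$ are sound.
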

\begin{proof}
First, we show that $\calE_{\max}^*\preceq \calS_{\max}$.

Take arbitrarily $z,u\in\D$ and $f\in\text{dom}(\calE_{\max}^*)$. On one hand, we have
\begin{eqnarray*}
\calE_{\max}K_z(u) 
=\dfrac{a+ub}{1-u\overline{z}}+\dfrac{(d+uc+u^2b)\overline{z}}{(1-u\overline{z})^2}
=(d\overline{z}+a)K_z(u)+(d\overline{z}^2+c\overline{z}+b)K_z^{[1]}(u),
\end{eqnarray*}
and hence
\begin{eqnarray*}
\langle f,\calE_{\max}K_z\rangle=(z\overline{d}+\overline{a})f(z)+(z^2\overline{d}+z\overline{c}+\overline{b})f'(z)=\widehat{\psi_0}(z)f(z)+\widehat{\psi_1}(z)f'(z).
\end{eqnarray*}
On the other hand, by the definitions of adjoint operators and kernel functions,
$$
\langle f,\calE_{\max}K_z\rangle =\langle \calE_{\max}^*f,K_z\rangle = \calE_{\max}^*f(z).
$$
These show that $\calE_{\max}^*f(z)=\widehat{\psi_0}(z)f(z)+\widehat{\psi_1}(z)f'(z)=\calS_{\max}f(z)$. Due to the arbitrariness of $f$ in $\text{dom}(\calE_{\max}^*)$, we get $\calE_{\max}^*\preceq \calS_{\max}$.

Next, we show that $\calS_{\max}\preceq \calE_{\max}^*$. It is enough to prove that
$$
\langle \calE_{\max}f,g\rangle=\langle f,\calS_{\max}g\rangle,\quad\forall f\in\text{dom}(\calE_{\max}),\forall g\in\text{dom}(\calS_{\max}).
$$
Take arbitrarily $f\in\text{dom}(\calE_{\max})$ and $g\in\text{dom}(\calS_{\max})$. We have
\begin{eqnarray*}
\int\limits_{-\pi}^\pi \calE_{\max}f(e^{i\theta})\overline{g(e^{i\theta})}\,d\theta
&=&\int\limits_{-\pi}^\pi (a+be^{i\theta})f(e^{i\theta})\overline{g(e^{i\theta})}\,d\theta\\
&&+\int\limits_{-\pi}^\pi (d+ce^{i\theta}+be^{2i\theta})f'(e^{i\theta})\overline{g(e^{i\theta})}\,d\theta.
\end{eqnarray*}
The second integral of the right-hand side is rewritten as
\begin{eqnarray*}
&&\int\limits_{-\pi}^\pi (d+ce^{i\theta}+be^{2i\theta})f'(e^{i\theta})\overline{g(e^{i\theta})}\,d\theta\\
&&=\int\limits_{-\pi}^\pi (de^{-i\theta}+c+be^{i\theta})\overline{g(e^{i\theta})}\,\dfrac{df(e^{i\theta})}{i}\\
&&=\int\limits_{-\pi}^\pi f(e^{i\theta})\left[(de^{-i\theta}-be^{i\theta})\overline{g(e^{i\theta})}+(de^{-2i\theta}+ce^{-i\theta}+b)\overline{g'(e^{i\theta})}\right]\,d\theta.
\end{eqnarray*}
Thus, we get
\begin{eqnarray*}
\int\limits_{-\pi}^\pi \calE_{\max}f(e^{i\theta})\overline{g(e^{i\theta})}\,d\theta
=\int\limits_{-\pi}^\pi f(e^{i\theta})\left[\overline{(de^{i\theta}+a)g(e^{i\theta})+(de^{2i\theta}+ce^{i\theta}+b)g'(e^{i\theta})}\right]\,d\theta,
\end{eqnarray*}
which gives the desired conclusion.
\end{proof}

\section{Complex symmetry with respect to $\calC_{\alpha,\beta}$}
In this section, we characterize first-order differential operators which are $\calC$-selfadjoint with respect to the conjugation
\begin{equation}\label{conju-c1}
\calC_{\alpha,\beta}f(z)=\alpha\overline{f(\beta\overline{z})},\quad f\in H^2,
\end{equation}
where $\alpha,\beta\in\partial\D$. To simplify the term, these operators are called \emph{$\calC_{\alpha,\beta}$-selfadjoint}.

For the necessary condition, we only apply the symmetric condition $\calC_{\alpha,\beta}\calE\calC_{\alpha,\beta}=\calE^*$ to kernel functions $K_z$, $z\in\D$. This progress leads to the use of Lemma \ref{psi-form-c1}. When proving the sufficient condition, some care must be taken. This is due to the fact that ``a bounded operator which is complex symmetric on kernel functions, is necessarily complex symmetric on the whole Hardy space" is no longer true for differential operators. In other words, the equality that
$$
\calC_{\alpha,\beta}\calE\calC_{\alpha,\beta}K_z=\calE^*K_z,\quad\forall z\in\D
$$
cannot ensure that two unbounded operators $\calC_{\alpha,\beta}\calE\calC_{\alpha,\beta}$ and $\calE^*$ are identical. To prove that $\calC_{\alpha,\beta}\calE\calC_{\alpha,\beta}=\calE^*$, we take turns solving the following tasks: (1) compute the adjoint $\calE^*$; (2) identify the operator $\calC_{\alpha,\beta}\calE\calC_{\alpha,\beta}$. The answer to the task (1) is derived from Theorem \ref{adjoint-form}, while the one to the task (2) lies in Proposition \ref{conj-D}.

\subsection{Auxiliary lemmas}
The following observation is a simple computation, but it will be an important step toward identifying the operator $\calC_{\alpha,\beta}\calE_{\max}\calC_{\alpha,\beta}$.
\begin{prop}\label{conj-D}
The identity $\calC_{\alpha,\beta}E(\psi_0,\psi_1)\calC_{\alpha,\beta}=E(\widehat{\psi_0},\widehat{\psi_1})$ holds, where
$$
\widehat{\psi_0}(z)=\overline{\psi_0(\beta\overline{z})},\,\,\widehat{\psi_1}(z)=\overline{\psi_1(\beta\overline{z})}\beta,\quad z\in\D.
$$
\end{prop}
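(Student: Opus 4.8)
The plan is to verify the identity pointwise: for a holomorphic $f$ and $z\in\D$, I will compute $\calC_{\alpha,\beta}E(\psi_0,\psi_1)\calC_{\alpha,\beta}f(z)$ directly from the definition \eqref{conju-c1} and show it equals $\widehat{\psi_0}(z)f(z)+\widehat{\psi_1}(z)f'(z)$. The only genuinely delicate point is differentiating the map $z\mapsto\overline{f(\beta\overline{z})}$, which looks anti-holomorphic but is in fact holomorphic in $z$; everything else is bookkeeping that collapses because $|\alpha|=|\beta|=1$.

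First I would record the derivative of $\calC_{\alpha,\beta}$. Writing $f(\zeta)=\sum_n a_n\zeta^n$, one sees $\overline{f(\beta\overline{z})}=\sum_n\overline{a_n}\,\overline{\beta}^{\,n}z^n$, so this is indeed holomorphic in $z$ and
\[
\frac{d}{dz}\,\overline{f(\beta\overline{z})}=\overline{\beta}\,\overline{f'(\beta\overline{z})}.
\]
Equivalently, one can route this through the conjugation $\calJ f(\zeta)=\overline{f(\overline{\zeta})}$ and the chain rule. In either case $(\calC_{\alpha,\beta}f)'(z)=\alpha\overline{\beta}\,\overline{f'(\beta\overline{z})}$, the appearance of the factor $\overline{\beta}$ being the one thing to get right.

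Next I apply $E(\psi_0,\psi_1)$ to $g:=\calC_{\alpha,\beta}f$. Using the derivative formula,
\[
E(\psi_0,\psi_1)g(z)=\alpha\,\psi_0(z)\overline{f(\beta\overline{z})}+\alpha\overline{\beta}\,\psi_1(z)\overline{f'(\beta\overline{z})}.
\]
Then I apply $\calC_{\alpha,\beta}$ once more, that is, replace $z$ by $\beta\overline{z}$, conjugate, and multiply by $\alpha$. The crucial simplification is the involutive cancellation $\beta\overline{\beta\overline{z}}=|\beta|^2z=z$, so that the inner arguments of $f$ and $f'$ return to $z$, together with $\alpha\overline{\alpha}=1$. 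Carrying out the conjugation then gives
\[
\calC_{\alpha,\beta}E(\psi_0,\psi_1)\calC_{\alpha,\beta}f(z)=\overline{\psi_0(\beta\overline{z})}\,f(z)+\beta\,\overline{\psi_1(\beta\overline{z})}\,f'(z),
\]
which is exactly $E(\widehat{\psi_0},\widehat{\psi_1})f(z)$ with $\widehat{\psi_0},\widehat{\psi_1}$ as in the statement.

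I expect no serious obstacle here; the proof is a direct verification, and the two identities $|\alpha|=|\beta|=1$ do all the work of reassembling the expression into the advertised symbols. I would only remark that the computation is carried out at the level of the formal differential expression $E$, so the identity transfers verbatim to the operators $\calC_{\alpha,\beta}\calE_{\max}\calC_{\alpha,\beta}$ and $\calE_{\max}$ (together with their domains), which is how it will be used later.
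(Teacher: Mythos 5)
Your proposal is correct and follows essentially the same route as the paper's proof: compute $E(\psi_0,\psi_1)\calC_{\alpha,\beta}f$ using the derivative formula $(\calC_{\alpha,\beta}f)'(z)=\alpha\overline{\beta}\,\overline{f'(\beta\overline{z})}$, then apply $\calC_{\alpha,\beta}$ once more and let $|\alpha|=|\beta|=1$ collapse the expression. Your power-series justification of the derivative step simply makes explicit what the paper leaves implicit.
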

\begin{proof}
We have
\begin{eqnarray*}
E(\psi_0,\psi_1)\calC_{\alpha,\beta}f(z)
&=&\psi_0(z)\alpha\overline{f(\beta\overline{z})}+\psi_1(z)\alpha\overline{\beta f'(\beta\overline{z})},
\end{eqnarray*}
which implies, as $|\alpha|=|\beta|=1$, that
\begin{eqnarray*}
\calC_{\alpha,\beta}E(\psi_0,\psi_1)\calC_{\alpha,\beta}f(z)
&=&\overline{\psi_0(\beta\overline{z})}f(z)+\overline{\psi_1(\beta\overline{z})}\beta f'(z)=E(\widehat{\psi_0},\widehat{\psi_1}).
\end{eqnarray*}
\end{proof}

Next, we give a necessary condition for a maximal differential operator of order $1$ when it is $\calC_{\alpha,\beta}$-selfadjoint on the Hardy space. To do that, we need the below lemma to compute the symbols.
\begin{lem}\label{psi-form-c1}
Suppose that the holomorphic functions $\psi_0$ and $\psi_1$ in $\D$ satisfy
\begin{equation}\label{important-c1}
\psi_0(u)+\dfrac{\psi_1(u)\overline{z}}{1-u\overline{z}}=\psi_0(\beta\overline{z})+\dfrac{\psi_1(\beta\overline{z})u\overline{\beta}}{1-u\overline{z}},\quad\forall z,u\in\D.
\end{equation}
Then
\begin{equation}\label{form-psi-c1}
\psi_0(u)=a+ub,\quad\psi_1(u)=b\beta+uc+u^2b,\quad u\in\D,
\end{equation}
where $a,b,c\in\C$ are constants.
\end{lem}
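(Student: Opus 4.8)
The plan is to remove the apparent obstruction caused by $\overline{z}$. Although $\overline{z}$ enters \eqref{important-c1}, the identity is imposed for \emph{every} $z\in\D$, so $\overline{z}$ runs over the whole disc; I may therefore substitute a fresh variable $v=\overline{z}\in\D$ and regard \eqref{important-c1} as an equality of two functions that are jointly holomorphic in $(u,v)\in\D\times\D$. (Both sides are holomorphic there, since $1-uv$ never vanishes on $\D\times\D$ and $\beta v\in\D$ whenever $v\in\D$.) Clearing the common denominator $1-uv$ turns \eqref{important-c1} into the clean polynomial-type identity
$$
\psi_0(u)(1-uv)+v\,\psi_1(u)=\psi_0(\beta v)(1-uv)+u\overline{\beta}\,\psi_1(\beta v),\qquad (u,v)\in\D\times\D,
$$
and the whole proof reduces to comparing Taylor coefficients in this one relation.

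I would first pin down $\psi_0$ by specialization. Putting $v=0$ collapses the identity to $\psi_0(u)=\psi_0(0)+u\overline{\beta}\,\psi_1(0)$, so $\psi_0$ is affine: writing $a=\psi_0(0)$ and $b=\overline{\beta}\,\psi_1(0)$ gives $\psi_0(u)=a+bu$ together with the relation $\psi_1(0)=b\beta$. Putting instead $u=0$ returns $\psi_0(\beta v)=a+b\beta v$, which is only a consistency check and yields nothing new.

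Substituting $\psi_0(u)=a+bu$ (hence $\psi_0(\beta v)=a+b\beta v$) back and cancelling the common terms $a$ and $-auv$, the identity becomes
$$
bu+v\,\psi_1(u)-bu^2v=b\beta v+u\overline{\beta}\,\psi_1(\beta v)-b\beta u v^2.
$$
Now expand $\psi_1(u)=\sum_{n\ge0}c_nu^n$, so that $\psi_1(\beta v)=\sum_{n\ge0}c_n\beta^nv^n$, and match coefficients of $u^jv^k$. The key structural feature is that $v\,\psi_1(u)$ lives entirely in the strip $k=1$ while $u\overline{\beta}\,\psi_1(\beta v)$ lives entirely in the strip $j=1$, so apart from the single overlap monomial $u^1v^1$ (which gives the tautology $c_1=c_1$, leaving $c_1=:c$ free) every constraint isolates one coefficient. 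Indeed, the monomial $u^1v^0$ forces $b=\overline{\beta}c_0$, i.e. $c_0=b\beta$; the monomials $u^2v^1$ and $u^1v^2$ both force $c_2=b$; and $u^1v^k$ with $k\ge3$ forces $c_k\beta^{k-1}=0$, i.e. $c_k=0$. Hence $\psi_1(u)=b\beta+cu+bu^2$, which is precisely \eqref{form-psi-c1}.

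The only place demanding genuine care is this last coefficient comparison: one must verify that the remaining monomials $u^jv^k$ lying outside the two strips (in particular all $j,k\ge2$) contribute $0=0$ and impose no hidden relation, and that the two extra terms $-bu^2v$ and $-b\beta uv^2$ are correctly absorbed into the $u^2v^1$ and $u^1v^2$ comparisons. Since these bookkeeping checks all close consistently and each surviving coefficient is determined exactly once, the computation terminates with the stated forms of $\psi_0$ and $\psi_1$.
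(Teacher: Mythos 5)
Your proof is correct. It opens exactly as the paper's does: you specialize the identity at $v=0$ (the paper sets $z=0$) to conclude that $\psi_0$ is affine, with $\psi_0(u)=a+bu$ and $\psi_1(0)=b\beta$, and then substitute this back into \eqref{important-c1}. The two arguments part ways only at the final step. The paper divides the substituted identity by $\overline{z}$ and lets $z\to 0$, recovering $\psi_1$ from a single difference-quotient limit (in effect, the first-order Taylor expansion of $\psi_1(\beta\overline{z})$ near $0$); this is shorter, but the limit evaluation is left implicit. You instead clear the common denominator $1-uv$, regard the result as an equality of functions holomorphic on the bidisc $\D\times\D$, and compare all Taylor coefficients of $u^jv^k$. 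This costs more bookkeeping but buys transparency: the two-strip structure makes it visible at a glance why $\psi_1$ can have no terms of degree $\ge 3$, why $c_1$ is the unique free parameter, and that the overdetermined system is consistent (the $u^2v^1$ and $u^1v^2$ constraints both give $c_2=b$, and the $(1,0)$ and $(0,1)$ constraints both reproduce $c_0=b\beta$). Your preliminary observation that $\overline{z}$ may be replaced by a free variable $v\in\D$ -- because conjugation maps $\D$ onto itself -- is precisely the justification needed for either argument, and is only implicit in the paper.
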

\begin{proof}
Indeed, letting $z=0$ in \eqref{important-c1} gives $\psi_0(u)=\psi_0(0)+\psi_1(0)u\overline{\beta}$. Substitute $\psi_0$ back into \eqref{important-c1} to obtain
$$
\dfrac{\psi_1(u)}{1-u\overline{z}}-\psi_1(0)=\dfrac{1}{\overline{z}}\left[\dfrac{\psi_1(\beta\overline{z})}{1-u\overline{z}}-\psi_1(0)\right]u\overline{\beta}
$$
In the above identity, we let $z\to 0$ to get the desired conclusion.
\end{proof}

The necessary condition was given by the following proposition.
\begin{prop}\label{=>-c1}
Let $\calE_{\max}$ be a maximal differential operator of order $1$, induced by the holomorphic functions $\psi_0$ and $\psi_1$, and $\calC_{\alpha,\beta}$ the weighted composition conjugation given by \eqref{conju-c1}. If the identity $\calE_{\max}K_z=\calC_{\alpha,\beta}\calE_{\max}^*\calC_{\alpha,\beta}K_z$ holds for every $z\in\D$, then the symbols are of the forms \eqref{form-psi-c1}.
\end{prop}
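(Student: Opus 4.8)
The plan is to evaluate the hypothesized identity $\calE_{\max}K_z=\calC_{\alpha,\beta}\calE_{\max}^*\calC_{\alpha,\beta}K_z$ as functions of a second variable $u$, to reduce it to the functional equation \eqref{important-c1}, and then simply invoke Lemma \ref{psi-form-c1}. For the left-hand side I would apply the differential expression directly to the kernel (exactly as in the opening computation of Theorem \ref{adjoint-form}): using $\partial K_z/\partial u=\overline{z}(1-u\overline{z})^{-2}$ and factoring out $K_z(u)$ gives
$$\calE_{\max}K_z(u)=\psi_0(u)K_z(u)+\psi_1(u)\frac{\partial K_z}{\partial u}(u)=K_z(u)\left[\psi_0(u)+\frac{\psi_1(u)\overline{z}}{1-u\overline{z}}\right].$$

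For the right-hand side I would process the three operators from the inside out. First, a direct substitution into the definition \eqref{conju-c1} shows that the conjugation maps kernels to kernels: $\calC_{\alpha,\beta}K_z=\alpha K_{\beta\overline{z}}$, and on the second-type kernels $\calC_{\alpha,\beta}K_{\beta\overline{z}}^{[1]}=\alpha\overline{\beta}K_z^{[1]}$ (with $\calC_{\alpha,\beta}K_{\beta\overline{z}}=\alpha K_z$ following from the involution property). Next, Lemma \ref{bcajkf} with $m=0$ evaluates the adjoint on the intermediate kernel, namely $\calE_{\max}^*K_{\beta\overline{z}}=\overline{\psi_0(\beta\overline{z})}K_{\beta\overline{z}}+\overline{\psi_1(\beta\overline{z})}K_{\beta\overline{z}}^{[1]}$. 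Threading these three steps together, and tracking the two scalar conjugations forced by the anti-linearity of $\calC_{\alpha,\beta}$ (so that the outer $\overline{\alpha}$ meets the inner $\alpha$ and cancels, while each barred symbol value is conjugated back to an unbarred one), I expect to obtain
$$\calC_{\alpha,\beta}\calE_{\max}^*\calC_{\alpha,\beta}K_z(u)=K_z(u)\left[\psi_0(\beta\overline{z})+\frac{\psi_1(\beta\overline{z})u\overline{\beta}}{1-u\overline{z}}\right].$$

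Equating the two expressions and cancelling the nonvanishing factor $K_z(u)$ then leaves precisely the functional equation \eqref{important-c1}, valid for all $z,u\in\D$, and Lemma \ref{psi-form-c1} delivers the claimed forms \eqref{form-psi-c1} at once. The only place demanding care — and the main obstacle — is the bookkeeping in the middle step: because $\calC_{\alpha,\beta}$ is applied twice and is anti-linear, every scalar it passes (the weights $\alpha,\overline{\beta}$ together with the symbol values) must be conjugated at the correct moment, and it is the surviving phase $\overline{\beta}$ produced by $\calC_{\alpha,\beta}K_{\beta\overline{z}}^{[1]}=\alpha\overline{\beta}K_z^{[1]}$ that generates the factor $u\overline{\beta}$ matching the right-hand side of \eqref{important-c1}. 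A stray sign or an omitted conjugate at this stage would alter the functional equation and break the appeal to Lemma \ref{psi-form-c1}, so I would verify the action of $\calC_{\alpha,\beta}$ on both kernel families explicitly before assembling the final identity.
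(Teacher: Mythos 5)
Your proposal is correct and follows essentially the same route as the paper: compute $\calE_{\max}K_z(u)$ and $\calC_{\alpha,\beta}\calE_{\max}^*\calC_{\alpha,\beta}K_z(u)$ explicitly, equate them to obtain the functional equation \eqref{important-c1}, and invoke Lemma \ref{psi-form-c1}. The only difference is that you spell out the intermediate kernel computations (the action of $\calC_{\alpha,\beta}$ on $K_z$ and $K_{\beta\overline{z}}^{[1]}$, and Lemma \ref{bcajkf} with $m=0$) that the paper's proof states without derivation, and your bookkeeping of the anti-linearity is accurate.
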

\begin{proof}
Note that
$$
\calE_{\max}K_z(u)=\dfrac{\psi_0(u)}{1-u\overline{z}}+\dfrac{\psi_1(u)\overline{z}}{(1-u\overline{z})^2}.
$$
and
$$
\calC_{\alpha,\beta}\calE_{\max}^*\calC_{\alpha,\beta}K_z(u)=\dfrac{\psi_0(\beta\overline{z})}{1-u\overline{z}}+\dfrac{\psi_1(\beta\overline{z})u\overline{\beta}}{(1-u\overline{z})^2}.
$$
Thus, the identity $\calE_{\max}K_z=\calC_{\alpha,\beta}\calE_{\max}^*\calC_{\alpha,\beta}K_z$ is reduced to \eqref{important-c1}, and hence, by Lemma \ref{psi-form-c1}, we get \eqref{form-psi-c1}.
\end{proof}

\subsection{Maximal domain}
With all preparation discussed in the preceding subsection, we now characterize maximal differential operators of order $1$, which are $\calC_{\alpha,\beta}$-selfadjoint on the Hardy space $H^2$.
\begin{thm}\label{Cself-c1}
Let $\calC_{\alpha,\beta}$ be the weighted composition conjugation given by \eqref{conju-c1}, and $\calE_{\max}$ the maximal differential operator of order $1$, induced by the holomorphic functions $\psi_0$ and $\psi_1$. The following assertions are equivalent.
\begin{enumerate}
\item The operator $\calE_{\max}$ is $\calC_{\alpha,\beta}$-selfadjoint.
\item The domain $\text{dom}(\calE_{\max})$ is dense and $\calE_{
\max}^*\preceq\calC_{\alpha,\beta}\calE_{\max}\calC_{\alpha,\beta}$.
\item The symbols are of the forms \eqref{form-psi-c1}, that is
$$
\psi_0(u)=a+ub,\quad\psi_1(u)=b\beta+uc+u^2b,\quad u\in\D,
$$
where $a,b,c\in\C$.
\end{enumerate}
\end{thm}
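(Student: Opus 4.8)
The plan is to establish the cyclic chain of implications $(1)\Rightarrow(2)\Rightarrow(3)\Rightarrow(1)$. The first implication is essentially a matter of unwinding definitions. If $\calE_{\max}$ is $\calC_{\alpha,\beta}$-selfadjoint, then by the very definition of $\calC$-selfadjointness the operator is densely defined (so that $\calE_{\max}^*$ exists) and satisfies $\calE_{\max}=\calC_{\alpha,\beta}\calE_{\max}^*\calC_{\alpha,\beta}$; since $\calC_{\alpha,\beta}$ is an involution, applying it on both sides yields $\calC_{\alpha,\beta}\calE_{\max}\calC_{\alpha,\beta}=\calE_{\max}^*$, which in particular gives the restriction $\calE_{\max}^*\preceq\calC_{\alpha,\beta}\calE_{\max}\calC_{\alpha,\beta}$ required in $(2)$.

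For $(2)\Rightarrow(3)$ I would transport the hypothesis to the kernel functions and then invoke Proposition \ref{=>-c1}. Conjugating the restriction $\calE_{\max}^*\preceq\calC_{\alpha,\beta}\calE_{\max}\calC_{\alpha,\beta}$ by $\calC_{\alpha,\beta}$ (which preserves restrictions, being bijective and involutive) produces $\calC_{\alpha,\beta}\calE_{\max}^*\calC_{\alpha,\beta}\preceq\calE_{\max}$. A short computation shows $\calC_{\alpha,\beta}K_z=\alpha K_{\beta\overline{z}}$, and since $\beta\overline{z}\in\D$, Lemma \ref{bcajkf} guarantees $K_{\beta\overline{z}}\in\text{dom}(\calE_{\max}^*)$; hence $K_z\in\text{dom}(\calC_{\alpha,\beta}\calE_{\max}^*\calC_{\alpha,\beta})$. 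Evaluating the restriction on $K_z$ then gives $\calC_{\alpha,\beta}\calE_{\max}^*\calC_{\alpha,\beta}K_z=\calE_{\max}K_z$ for every $z\in\D$, which is exactly the hypothesis of Proposition \ref{=>-c1}. That proposition delivers the asserted polynomial forms \eqref{form-psi-c1}.

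The substance of the theorem lies in the sufficiency $(3)\Rightarrow(1)$, where I must prove the full operator identity $\calC_{\alpha,\beta}\calE_{\max}\calC_{\alpha,\beta}=\calE_{\max}^*$, not merely agreement on kernel functions. Assuming the forms \eqref{form-psi-c1}, the constant term $b\beta$ of $\psi_1$ plays the role of $d$ in Theorem \ref{adjoint-form}, while its leading coefficient coincides with the coefficient $b$ appearing in $\psi_0$, so that theorem applies and computes $\calE_{\max}^*$ as the maximal operator $\calS_{\max}$ with symbols $\widehat{\psi_0}(z)=\overline{a}+z\overline{b}\,\overline{\beta}$ and $\widehat{\psi_1}(z)=\overline{b}+z\overline{c}+z^2\overline{b}\,\overline{\beta}$. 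On the other hand, Proposition \ref{conj-D} identifies the formal expression of $\calC_{\alpha,\beta}\calE_{\max}\calC_{\alpha,\beta}$ as $E(\widetilde{\psi_0},\widetilde{\psi_1})$ with $\widetilde{\psi_0}(z)=\overline{\psi_0(\beta\overline{z})}$ and $\widetilde{\psi_1}(z)=\overline{\psi_1(\beta\overline{z})}\beta$; a direct substitution, using $|\beta|=1$, shows $\widetilde{\psi_0}=\widehat{\psi_0}$ and $\widetilde{\psi_1}=\widehat{\psi_1}$, so the two operators share the same formal differential expression.

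It remains to match the domains, and this is the step I expect to be the main obstacle, precisely the point flagged in the discussion preceding the theorem. The operator $\calC_{\alpha,\beta}\calE_{\max}\calC_{\alpha,\beta}$ carries the domain $\calC_{\alpha,\beta}(\text{dom}(\calE_{\max}))$, and I must show this equals the maximal domain $\text{dom}(\calS_{\max})$. The key is that $\calC_{\alpha,\beta}$ is an isometric bijection of $H^2$ that intertwines the two formal expressions: from Proposition \ref{conj-D} one gets $E(\psi_0,\psi_1)\calC_{\alpha,\beta}g=\calC_{\alpha,\beta}E(\widehat{\psi_0},\widehat{\psi_1})g$ for holomorphic $g$, so $E(\widehat{\psi_0},\widehat{\psi_1})g\in H^2$ if and only if $E(\psi_0,\psi_1)\calC_{\alpha,\beta}g\in H^2$. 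This equivalence says exactly that $g\in\text{dom}(\calS_{\max})$ iff $\calC_{\alpha,\beta}g\in\text{dom}(\calE_{\max})$, i.e. $\text{dom}(\calS_{\max})=\calC_{\alpha,\beta}(\text{dom}(\calE_{\max}))$. Hence $\calC_{\alpha,\beta}\calE_{\max}\calC_{\alpha,\beta}=\calS_{\max}=\calE_{\max}^*$; applying $\calC_{\alpha,\beta}$ once more yields $\calE_{\max}=\calC_{\alpha,\beta}\calE_{\max}^*\calC_{\alpha,\beta}$, and density of $\text{dom}(\calE_{\max})$ (it contains all polynomials once the symbols are polynomials) closes the argument for $(1)$.
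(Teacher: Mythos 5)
Your proof is correct and follows essentially the same route as the paper: the same cyclic chain $(1)\Rightarrow(2)\Rightarrow(3)\Rightarrow(1)$, with Proposition \ref{=>-c1} (applied to kernel functions) handling $(2)\Rightarrow(3)$ and the combination of Proposition \ref{conj-D} with Theorem \ref{adjoint-form} handling $(3)\Rightarrow(1)$. Your only additions are to spell out details the paper leaves implicit --- the membership $K_z\in\text{dom}(\calC_{\alpha,\beta}\calE_{\max}^*\calC_{\alpha,\beta})$ via $\calC_{\alpha,\beta}K_z=\alpha K_{\beta\overline{z}}$, and the domain identification $\calC_{\alpha,\beta}(\text{dom}(\calE_{\max}))=\text{dom}(\calS_{\max})$ via the intertwining of the two formal expressions --- which strengthen rather than change the argument.
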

\begin{proof}
It is obvious that $(1)\Longrightarrow(2)$, while Proposition \ref{=>-c1} shows that $(2)\Longrightarrow(3)$. It rests to demonstrate that $(3)\Longrightarrow(1)$. Indeed, we suppose that (3) holds. We see from Proposition \ref{T-closed}, that the operator $\calE_{\max}$ is always closed. A direct computation can show that $\psi_0 K_z+\psi_1 K_z'$ is a linear combination of elements $K_z^{[\ell]}$, and hence it must belong to the domain $\text{dom}(\calE_{\max})$. This means that the operator $\calE_{\max}$ is densely defined. By Proposition \ref{conj-D}(1) and Theorem \ref{adjoint-form}, we obtain $\calC_{\alpha,\beta}\calE_{\max}\calC_{\alpha,\beta}=\calS_{\max}=\calE_{\max}^*$, where $\calS_{\max}$ is the maximal differential operator of order $1$, induced by the symbols $\widehat{\psi_0}(z)=z\overline{b\beta}+\overline{a}$ and $\widehat{\psi_1}(z)=z^2\overline{b\beta}+z\overline{c}+\overline{b}$.
\end{proof}

\subsection{Non-maximal domain}
In the previous subsection, we investigated which the symbols $\psi_0$ and $\psi_1$ give rise the complex symmetry of maximal differential operators. Now we relax the domain assumption to only explore that $\calC_{\alpha,\beta}$-selfadjointness cannot be detached from the maximal domain.
\begin{thm}\label{no-max-do}
Let $\calE$ be a first-order differential operator induced by the holomorphic functions $\psi_0$ and $\psi_1$ (note that $\calE\preceq \calE_{\max}$). Furthermore, let $\calC_{\alpha,\beta}$ the weighted composition conjugation given by \eqref{conju-c1}. Then the operator $\calE$ is $\calC_{\alpha,\beta}$-selfadjoint if and only if the following conditions hold.
\begin{enumerate}
\item $\calE=\calE_{\max}$.
\item The symbols are of the forms \eqref{form-psi-c1}, that is
$$
\psi_0(u)=a+ub,\quad\psi_1(u)=b\beta+uc+u^2b,\quad u\in\D,
$$
where $a,b,c\in\C$.
\end{enumerate}
\end{thm}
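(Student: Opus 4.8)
The plan is to prove the two implications separately, noting that the substance lies entirely in the forward direction while the reverse direction is a one-line appeal to the maximal case. For the sufficiency, suppose (1) and (2) hold. Since the symbols are of the form \eqref{form-psi-c1}, the implication $(3)\Rightarrow(1)$ of Theorem \ref{Cself-c1} shows that $\calE_{\max}$ is $\calC_{\alpha,\beta}$-selfadjoint; as $\calE=\calE_{\max}$ by (1), the operator $\calE$ is $\calC_{\alpha,\beta}$-selfadjoint, and we are done.

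For the necessity, suppose $\calE$ is $\calC_{\alpha,\beta}$-selfadjoint, so that $\calE$ is densely defined and $\calE=\calC_{\alpha,\beta}\calE^*\calC_{\alpha,\beta}$. I would first recover the symbol forms. The key preliminary observation is that the conjugation sends kernel functions to (scalar multiples of) kernel functions: a direct computation gives $\calC_{\alpha,\beta}K_z=\alpha K_{\beta\overline{z}}$. By Lemma \ref{bcajkf} every $K_w$ lies in $\text{dom}(\calE^*)$, and since $\calC_{\alpha,\beta}$-selfadjointness forces $\text{dom}(\calE)=\calC_{\alpha,\beta}\text{dom}(\calE^*)$, it follows that each $K_w$ also lies in $\text{dom}(\calE)$. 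Now for a fixed kernel function $K_z$ I would combine three facts: $\calE K_z=\calE_{\max}K_z$ because $\calE\preceq\calE_{\max}$; the operators $\calE^*$ and $\calE_{\max}^*$ agree on $K_{\beta\overline{z}}$ because $\calE_{\max}^*\preceq\calE^*$ and $K_{\beta\overline{z}}\in\text{dom}(\calE_{\max}^*)$ by Lemma \ref{bcajkf}; and the selfadjoint identity $\calE K_z=\calC_{\alpha,\beta}\calE^*\calC_{\alpha,\beta}K_z$. Chaining these (and using antilinearity of $\calC_{\alpha,\beta}$ to move the scalar $\alpha$ through) yields $\calE_{\max}K_z=\calC_{\alpha,\beta}\calE_{\max}^*\calC_{\alpha,\beta}K_z$ for every $z\in\D$, which is precisely the hypothesis of Proposition \ref{=>-c1}; that proposition then delivers the symbol forms \eqref{form-psi-c1}, establishing (2).

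With the symbols now known to have the form \eqref{form-psi-c1}, Theorem \ref{Cself-c1} guarantees that $\calE_{\max}$ is itself $\calC_{\alpha,\beta}$-selfadjoint, i.e.\ $\calE_{\max}=\calC_{\alpha,\beta}\calE_{\max}^*\calC_{\alpha,\beta}$. To obtain (1) I would run a short squeezing argument on operator inclusions: from $\calE\preceq\calE_{\max}$, taking adjoints reverses the inclusion to give $\calE_{\max}^*\preceq\calE^*$; conjugating by $\calC_{\alpha,\beta}$ preserves inclusions, so $\calC_{\alpha,\beta}\calE_{\max}^*\calC_{\alpha,\beta}\preceq\calC_{\alpha,\beta}\calE^*\calC_{\alpha,\beta}$. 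Since both $\calE$ and $\calE_{\max}$ are $\calC_{\alpha,\beta}$-selfadjoint, the two sides of this inclusion are exactly $\calE_{\max}$ and $\calE$, so $\calE_{\max}\preceq\calE$. Together with the hypothesis $\calE\preceq\calE_{\max}$ this forces $\calE=\calE_{\max}$, which is (1).

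I expect the main obstacle to be the bookkeeping in the first step of the necessity argument: one must verify with care that the kernel functions genuinely belong to $\text{dom}(\calE)$, not merely to $\text{dom}(\calE_{\max})$, because this is exactly the place where non-maximality of the domain could a priori obstruct the reduction, and it is this membership that legitimizes the appeal to Proposition \ref{=>-c1}. By contrast, the concluding squeeze is purely formal once one records that adjoints reverse inclusions while conjugation by $\calC_{\alpha,\beta}$ preserves them.
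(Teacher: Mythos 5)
Your proposal is correct and follows essentially the same route as the paper: reduce the necessity to the kernel-function identity $\calE_{\max}K_z=\calC_{\alpha,\beta}\calE_{\max}^*\calC_{\alpha,\beta}K_z$, invoke Proposition \ref{=>-c1} to get the symbol forms, upgrade via Theorem \ref{Cself-c1}, and finish with the inclusion squeeze. The only (harmless) difference is bookkeeping: the paper obtains that identity by first deriving the operator inclusion $\calC_{\alpha,\beta}\calE_{\max}^*\calC_{\alpha,\beta}\preceq\calE_{\max}$ from $\calE_{\max}^*\preceq\calE^*=\calC_{\alpha,\beta}\calE\calC_{\alpha,\beta}\preceq\calC_{\alpha,\beta}\calE_{\max}\calC_{\alpha,\beta}$ and then evaluating at kernel functions, whereas you first show $K_z\in\text{dom}(\calE)$ and chain vector-level equalities -- both arguments rest on the same facts and are equally valid.
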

\begin{proof}
The sufficient condition holds by Theorem \ref{Cself-c1}. To prove the necessary condition, we suppose that $\calE=\calC_{\alpha,\beta}\calE^*\calC_{\alpha,\beta}$. First, we show that $\calE_{\max}$ is $\calC_{\alpha,\beta}$-selfadjoint. Indeed, since $\calE\preceq \calE_{\max}$, we have
$$
\calE_{\max}^*\preceq \calE^*=\calC_{\alpha,\beta}\calE\calC_{\alpha,\beta}\preceq\calC_{\alpha,\beta}\calE_{\max}\calC_{\alpha,\beta},
$$
which implies, as $\calC_{\alpha,\beta}$ is involutive, that $\calC_{\alpha,\beta}\calE_{\max}^*\calC_{\alpha,\beta}\preceq \calE_{\max}$. A direct computation shows that kernel functions belong to $\text{dom}(\calC_{\alpha,\beta}\calE_{\max}^*\calC_{\alpha,\beta})$, and so
$$
\calC_{\alpha,\beta}\calE_{\max}^*\calC_{\alpha,\beta}K_z =\calE_{\max}K_z,\quad\forall z\in\D.
$$
By Proposition \ref{=>-c1}, we get the assertion (2), and hence, by Theorem \ref{Cself-c1}, the operator $\calE_{\max}$ is $\calC_{\alpha,\beta}$-selfadjoint. The assertion (1) follows from the following facts
$$
\calC_{\alpha,\beta}\calE\calC_{\alpha,\beta}\preceq \calC_{\alpha,\beta}\calE_{\max}\calC_{\alpha,\beta}=\calE_{\max}^*\preceq \calE^*=\calC_{\alpha,\beta}\calE\calC_{\alpha,\beta}.
$$
\end{proof}

\section{Complex symmetry with respect to $\calJ_{\beta,\lambda}$}
The aim of this section is to characterize differential operators which are $\calC$-selfadjoint with respect to the conjugation $\calJ_{\beta,\lambda}$ (\emph{$\calJ_{\beta,\lambda}$-selfadjoint} for short). To simplify the notations, we write
\begin{equation}\label{conj-c2}
\calJ_{\beta,\lambda}f(z)=\beta\kappa(z)\overline{f(\overline{\varphi(z)})},
\end{equation}
where
$$
\kappa(z)=\dfrac{\sqrt{1-|\lambda|^2}}{1-z\overline{\lambda}},\quad\varphi(z)=\dfrac{\overline{\lambda}}{\lambda}\cdot\dfrac{\lambda-z}{1-z\overline{\lambda}},\quad z\in\D,
$$
and $\beta\in\partial\D,\lambda\in\D\setminus\{0\}$.

\subsection{Auxiliary lemmas}
Recall that for a bounded anti-linear operator $A$, its adjoint $A^*$ is a bounded anti-linear operator satisfying
\begin{equation}\label{anti-lin-adjoint}
\langle Ax,y\rangle=\langle A^*y,x\rangle,\quad\forall x,y\in\calH.
\end{equation}
The observation that ``an anti-linear operator is a conjugation if and only if it is both selfadjoint and unitary" mentioned in  \cite{HK2} holds for any separable complex Hilbert space. Thus, we get the identities $\calJ_{\beta,\lambda}=\calJ_{\beta,\lambda}^*=\calJ_{\beta,\lambda}^{-1}$, which help establish the action of $\calJ_{\beta,\lambda}$ on the kernel functions.
\begin{lem}\label{key-lem}
For every $z\in\D$, we have
\begin{enumerate}
\item $\calJ_{\beta,\lambda}K_z=\beta\kappa(z)K_{\overline{\varphi(z)}}$.
\item $\calJ_{\beta,\lambda}K_z^{[1]}=\beta\kappa'(z)K_{\overline{\varphi(z)}}+\beta\kappa(z)\varphi'(z)K_{\overline{\varphi(z)}}^{[1]}$.
\end{enumerate}
\end{lem}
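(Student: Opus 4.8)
The plan is to verify both formulas by a direct computation from the definition \eqref{conj-c2}, reducing the first to a symmetric scalar identity between kernel functions and then obtaining the second by differentiating the first in the parameter $z$.

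For part (1) I would start from the definition and evaluate at an arbitrary point $u\in\D$, namely
$$(\calJ_{\beta,\lambda}K_z)(u)=\beta\kappa(u)\overline{K_z(\overline{\varphi(u)})}.$$
The one point needing care is the anti-linear bookkeeping: since $K_z(w)=1/(1-\overline{z}w)$, conjugating gives $\overline{K_z(\overline{\varphi(u)})}=1/(1-z\varphi(u))$, so that $(\calJ_{\beta,\lambda}K_z)(u)=\beta\kappa(u)/(1-z\varphi(u))$. Writing $K_{\overline{\varphi(z)}}(u)=1/(1-\varphi(z)u)$, the claimed identity $\calJ_{\beta,\lambda}K_z=\beta\kappa(z)K_{\overline{\varphi(z)}}$ is then equivalent to the scalar identity
$$\frac{\kappa(u)}{1-z\varphi(u)}=\frac{\kappa(z)}{1-u\varphi(z)},\qquad z,u\in\D.$$
I would prove this by clearing denominators using the explicit forms of $\kappa$ and $\varphi$: a short computation shows that both sides equal $\frac{\lambda\sqrt{1-|\lambda|^2}}{\lambda-|\lambda|^2(z+u)+\overline{\lambda}zu}$, an expression manifestly symmetric in $z$ and $u$. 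This symmetric identity is the computational heart of the lemma.

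For part (2) the cleanest route is to differentiate the identity of part (1) in the parameter $z$. The key observation is that $K_z^{[1]}=\partial_{\overline{z}}K_z$, i.e. $K_z^{[1]}$ is the derivative of $K_z$ in the conjugate parameter $\overline{z}$; since $\calJ_{\beta,\lambda}$ is bounded and anti-linear, applying it converts this $\overline{z}$-derivative into the genuine holomorphic $z$-derivative of $\calJ_{\beta,\lambda}K_z$, so that $\calJ_{\beta,\lambda}K_z^{[1]}=\frac{d}{dz}\big(\beta\kappa(z)K_{\overline{\varphi(z)}}\big)$. Carrying out the product and chain rules, together with $\frac{d}{dz}K_{\overline{\varphi(z)}}=\varphi'(z)K_{\overline{\varphi(z)}}^{[1]}$, yields exactly the stated two-term formula. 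Alternatively one can sidestep this altogether and repeat the direct computation of part (1) with $K_z^{[1]}(w)=w/(1-\overline{z}w)^2$ in place of $K_z$, then match against the explicit expansion of the right-hand side.

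I do not anticipate any deep obstacle: the difficulty is entirely one of careful bookkeeping of the anti-linear conjugation — tracking which quantities get conjugated as $\overline{f(\overline{\varphi(\cdot)})}$ is pushed through the definition — together with the algebraic verification of the symmetric identity. In the differentiation route, the only genuinely subtle point is justifying that $\calJ_{\beta,\lambda}$ commutes with parameter-differentiation after replacing $\partial_{\overline{z}}$ by $\partial_z$, which follows from boundedness and anti-linearity; should one wish to avoid this, the purely computational proof of part (2) is routine, if slightly longer.
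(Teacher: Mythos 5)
Your proof is correct, but it takes a genuinely different route from the paper's. You compute $\calJ_{\beta,\lambda}K_z$ pointwise and reduce part (1) to the symmetric scalar identity $\kappa(u)/(1-z\varphi(u))=\kappa(z)/(1-u\varphi(z))$ (both sides do equal $\lambda\sqrt{1-|\lambda|^2}\,/\,\bigl(\lambda-|\lambda|^2(z+u)+\overline{\lambda}zu\bigr)$, as you claim), then get part (2) by differentiating in the parameter. The paper instead exploits the fact that a conjugation is selfadjoint as an anti-linear operator, $\calJ_{\beta,\lambda}=\calJ_{\beta,\lambda}^*$: for arbitrary $f\in H^2$ it writes $\langle f,\calJ_{\beta,\lambda}K_z^{[1]}\rangle=\langle K_z^{[1]},\calJ_{\beta,\lambda}f\rangle=\overline{[\calJ_{\beta,\lambda}f]'(z)}$, expands that derivative pointwise, and reads off the result from the reproducing property $f^{(m)}(w)=\langle f,K_w^{[m]}\rangle$. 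The paper's duality argument buys uniformity (parts (1) and (2) are the same computation, with no algebraic identity in $\kappa$ and $\varphi$ to verify) and it sidesteps the one genuinely subtle point in your argument: commuting $\calJ_{\beta,\lambda}$ with the difference quotient requires knowing that $\overline{z}\mapsto K_z$ is holomorphic as an $H^2$-valued map in the \emph{norm} topology with derivative $K_z^{[1]}$; this is standard (weak and strong holomorphy coincide for Banach-space-valued maps), but it is an extra ingredient beyond "boundedness and anti-linearity" alone. You correctly flag this and offer the purely computational fallback, which does work --- indeed the identity needed for part (2) is exactly the $z$-derivative of your symmetric identity, both sides being holomorphic in $z$ for fixed $u$. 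What your approach buys in return is explicitness and self-containedness: it produces the closed symmetric form of $\calJ_{\beta,\lambda}K_z$ by pure scalar algebra and needs no adjoint formalism for anti-linear operators.
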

\begin{proof}
We omit the conclusion (1) and prove the conclusion (2), as the first conclusion is rather simple. For every $f\in H^2$, we have
\begin{eqnarray*}
\langle f,\calJ_{\beta,\lambda}K_z^{[1]}\rangle
&=&\langle f,\calJ_{\beta,\lambda}^*K_z^{[1]}\rangle=\langle K_z^{[1]},\calJ_{\beta,\lambda}f\rangle=\overline{[\calJ_{\beta,\lambda}f]'(z)}\\
&=&\overline{\beta\kappa'(z)}f(\overline{\varphi(z)})+\overline{\beta\kappa(z)\varphi'(z)}f'(\overline{\varphi(z)})\\
&=&\langle f,\beta\kappa'(z)K_{\overline{\varphi(z)}}+\beta\kappa(z)\varphi'(z)K_{\overline{\varphi(z)}}^{[1]}\rangle,
\end{eqnarray*}
which gives the desired result.
\end{proof}

The following observation plays an important role in proving the sufficient condition of the main result. It allows us to compute the explicit structure of the operator $\calJ_{\beta,\lambda}\calE_{\max}\calJ_{\beta,\lambda}$.
\begin{prop}\label{conj-D-c2}
The identity $\calJ_{\beta,\lambda}E(\psi_0,\psi_1)\calJ_{\beta,\lambda}=E(\psi_0^\odot,\psi_1^\odot)$ holds, where
$$
\psi_0^\odot(z)=\overline{\psi_0(\overline{\varphi(z)})}+\dfrac{\lambda(1-z\overline{\lambda})\overline{\psi_1(\overline{\varphi(z)})}}{1-|\lambda|^2},\,\,
\psi_1^\odot(z)=-\dfrac{\lambda(1-z\overline{\lambda})^2\overline{\psi_1(\overline{\varphi(z)})}}{\overline{\lambda}(1-|\lambda|^2)},\quad z\in\D.
$$
\end{prop}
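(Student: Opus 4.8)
The plan is to establish the identity by a direct computation, in the same spirit as Proposition \ref{conj-D}, by composing the three operations $\calJ_{\beta,\lambda}$, $E(\psi_0,\psi_1)$, $\calJ_{\beta,\lambda}$ one after another and reading off the resulting symbols. First I would record that, since $f$ is holomorphic, the function $z\mapsto\overline{f(\overline{\varphi(z)})}$ is again holomorphic in $\D$ (it equals $F\circ\varphi$ with $F(w)=\overline{f(\overline{w})}$), and that its derivative is $\varphi'(z)\overline{f'(\overline{\varphi(z)})}$. Differentiating $\calJ_{\beta,\lambda}f(z)=\beta\kappa(z)\overline{f(\overline{\varphi(z)})}$ by the product rule then gives $(\calJ_{\beta,\lambda}f)'(z)=\beta\kappa'(z)\overline{f(\overline{\varphi(z)})}+\beta\kappa(z)\varphi'(z)\overline{f'(\overline{\varphi(z)})}$.

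Next I would apply $E(\psi_0,\psi_1)$ to $u:=\calJ_{\beta,\lambda}f$ and collect the result as $v(z)=\beta\,[\psi_0(z)\kappa(z)+\psi_1(z)\kappa'(z)]\,\overline{f(\overline{\varphi(z)})}+\beta\,\psi_1(z)\kappa(z)\varphi'(z)\,\overline{f'(\overline{\varphi(z)})}$, that is, a combination of $\overline{f(\overline{\varphi(z)})}$ and $\overline{f'(\overline{\varphi(z)})}$ with explicit holomorphic coefficients. The decisive step is to apply $\calJ_{\beta,\lambda}$ once more, $\calJ_{\beta,\lambda}v(z)=\beta\kappa(z)\overline{v(\overline{\varphi(z)})}$: substituting $\overline{\varphi(z)}$ into $v$ produces the double composition $\overline{\varphi(\overline{\varphi(z)})}$, which collapses to $z$ because $\calJ_{\beta,\lambda}$ is an involution (recalled before Lemma \ref{key-lem}); consequently the arguments $\overline{f(\overline{\varphi(\overline{\varphi(z)})})}$ and $\overline{f'(\overline{\varphi(\overline{\varphi(z)})})}$ become simply $f(z)$ and $f'(z)$ after conjugation. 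This leaves exactly an expression of the form $E(\psi_0^\odot,\psi_1^\odot)f(z)$ with
\[
\psi_0^\odot(z)=\kappa(z)\overline{\kappa(\overline{\varphi(z)})}\,\overline{\psi_0(\overline{\varphi(z)})}+\kappa(z)\overline{\kappa'(\overline{\varphi(z)})}\,\overline{\psi_1(\overline{\varphi(z)})},\qquad \psi_1^\odot(z)=\kappa(z)\overline{\kappa(\overline{\varphi(z)})}\,\overline{\varphi'(\overline{\varphi(z)})}\,\overline{\psi_1(\overline{\varphi(z)})}.
\]

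Finally I would reduce these coefficients to the stated closed forms by purely algebraic manipulation of the explicit data $\kappa(z)=\sqrt{1-|\lambda|^2}/(1-z\overline{\lambda})$ and $\varphi$. The computations rest on the single identity $1-\lambda\varphi(z)=(1-|\lambda|^2)/(1-z\overline{\lambda})$ together with $\varphi'(z)=-\overline{\lambda}(1-|\lambda|^2)/[\lambda(1-z\overline{\lambda})^2]$; from these one obtains $\kappa(z)\overline{\kappa(\overline{\varphi(z)})}=1$, $\kappa(z)\overline{\kappa'(\overline{\varphi(z)})}=\lambda(1-z\overline{\lambda})/(1-|\lambda|^2)$, and $\overline{\varphi'(\overline{\varphi(z)})}=-\lambda(1-z\overline{\lambda})^2/[\overline{\lambda}(1-|\lambda|^2)]$, which turn the two displayed coefficients into precisely the claimed $\psi_0^\odot$ and $\psi_1^\odot$. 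I expect the main obstacle to be bookkeeping rather than depth: one must track the anti-linear conjugations carefully and invoke the involution property $\overline{\varphi(\overline{\varphi(z)})}=z$ at exactly the right moment, after which the remaining simplifications are routine rational-function algebra.
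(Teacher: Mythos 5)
Your proposal is correct and follows essentially the same route as the paper's proof: apply $E(\psi_0,\psi_1)$ to $\calJ_{\beta,\lambda}f$ via the product rule, apply $\calJ_{\beta,\lambda}$ again using the involution $\overline{\varphi(\overline{\varphi(z)})}=z$, and simplify with the identities $\kappa(z)\overline{\kappa(\overline{\varphi(z)})}=1$, $\kappa(z)\overline{\kappa'(\overline{\varphi(z)})}=\lambda(1-z\overline{\lambda})/(1-|\lambda|^2)$, and $\overline{\varphi'(\overline{\varphi(z)})}=-\lambda(1-z\overline{\lambda})^2/[\overline{\lambda}(1-|\lambda|^2)]$, all of which you state correctly. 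You merely spell out details (holomorphy of $z\mapsto\overline{f(\overline{\varphi(z)})}$, the derivation of the key identities from $1-\lambda\varphi(z)=(1-|\lambda|^2)/(1-z\overline{\lambda})$) that the paper leaves implicit.
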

\begin{proof}
We have
\begin{eqnarray*}
E(\psi_0,\psi_1)\calJ_{\beta,\lambda}f(z)
= [\psi_0(z)\beta\kappa(z)+\psi_1(z)\beta\kappa'(z)]\overline{f(\overline{\varphi(z)})}+\psi_1(z)\beta\kappa(z)\varphi'(z)\overline{f'(\overline{\varphi(z)})},
\end{eqnarray*}
which implies, as $|\beta|=1$, that
\begin{eqnarray*}
\calJ_{\beta,\lambda}E(\psi_0,\psi_1)\calJ_{\beta,\lambda}f(z)
&=&\kappa(z)[\overline{\psi_0(\overline{\varphi(z)})\kappa(\overline{\varphi(z)})+\psi_1(\overline{\varphi(z)})\kappa'(\overline{\varphi(z)})}]f(z)\\
&&+\kappa(z)\overline{\psi_1(\overline{\varphi(z)})\kappa(\overline{\varphi(z)})\varphi'(\overline{\varphi(z)})}f'(z).
\end{eqnarray*}
Note that
$$
\kappa(z)\overline{\kappa(\overline{\varphi(z)})}=1,\,\,\kappa(z)\overline{\kappa'(\overline{\varphi(z)})}=\dfrac{\lambda(1-z\overline{\lambda})}{1-|\lambda|^2},\,\,\overline{\varphi'(\overline{\varphi(z)})}=-\dfrac{\lambda(1-z\overline{\lambda})^2}{\overline{\lambda}(1-|\lambda|^2)}.
$$
\end{proof}

Like as the $\calC_{\alpha,\beta}$-symmetry, we also need a technical lemma to compute the symbols when they give rise a $\calJ_{\beta,\lambda}$-selfadjoint differential operator.
\begin{lem}\label{fsfhs}
Let $\psi_0,\psi_1$ be the holomorphic functions in $\D$ such that the identity
\begin{equation}\label{important-2}
\dfrac{\psi_0(u)}{1-u\overline{z}}+\dfrac{\psi_1(u)\overline{z}}{(1-u\overline{z})^2}=\overline{\beta\kappa(z)}\left[\psi_0(\overline{\varphi(z)})\calJ_{\beta,\lambda}K_{\overline{\varphi(z)}}(u)+\psi_1(\overline{\varphi(z)})\calJ_{\beta,\lambda}K_{\overline{\varphi(z)}}^{[1]}(u)\right]
\end{equation}
holds for every $z,u\in\D$. Then these functions are of the following forms
\begin{equation}\label{form-psi-s2}
\psi_0(u)=a+bu,\quad\psi_1(u)=d+cu+bu^2,\quad u\in\D,
\end{equation}
where $a,b,c\in\C$ and $d=-\lambda b\overline{\lambda}^{-1}-\lambda c$.
\end{lem}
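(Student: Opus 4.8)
The plan is to unravel the right-hand side of \eqref{important-2} with Lemma \ref{key-lem} and turn the whole equation into a rational identity in the two independent variables $u$ and $\overline{z}$, from which the coefficients of $\psi_0$ and $\psi_1$ can be read off directly. Writing $w:=\overline{\varphi(z)}$ and applying Lemma \ref{key-lem} at the point $w$ replaces $\calJ_{\beta,\lambda}K_{w}$ and $\calJ_{\beta,\lambda}K_{w}^{[1]}$ on the right-hand side by explicit combinations of $K_{\overline{\varphi(w)}}$ and $K_{\overline{\varphi(w)}}^{[1]}$, with coefficients built from $\kappa(w),\kappa'(w),\varphi'(w)$.

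The decisive structural step is the involutivity identity $\overline{\varphi(\overline{\varphi(z)})}=z$. I would establish it, together with the telescoping relation $\overline{\kappa(z)}\,\kappa(w)=1$, from the single elementary computation
$$
1-\overline{\lambda}\,\overline{\varphi(z)}=\frac{1-|\lambda|^2}{1-\lambda\overline{z}},
$$
which follows by clearing denominators in $\varphi(s)=\tfrac{\overline{\lambda}}{\lambda}\cdot\tfrac{\lambda-s}{1-\overline{\lambda}s}$. Because $\overline{\varphi(w)}=z$, the kernels $K_{\overline{\varphi(w)}}$ and $K_{\overline{\varphi(w)}}^{[1]}$ collapse back to $K_z$ and $K_z^{[1]}$, so the right-hand side becomes a linear combination of $K_z$ and $K_z^{[1]}$; the same identity evaluates $\kappa,\kappa',\varphi'$ at $w$ in closed form. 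Clearing the factor $(1-\overline{z}u)^2$ then yields a polynomial identity of the shape
$$
\psi_0(u)(1-\overline{z}u)+\psi_1(u)\overline{z}=\overline{\kappa(z)}\bigl[A(z)(1-\overline{z}u)+B(z)u\bigr],
$$
where $A(z)=\psi_0(w)\kappa(w)+\psi_1(w)\kappa'(w)$ and $B(z)=\psi_1(w)\kappa(w)\varphi'(w)$ are viewed as functions of $\overline{z}$.

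From here the argument parallels Lemma \ref{psi-form-c1}. Setting $\overline{z}=0$ gives $\psi_0(u)=a+bu$ at once. Substituting this back and comparing powers of $u$, the right-hand side is of degree one in $u$, so the coefficients of $u^k$ with $k\ge3$ must vanish and the coefficient of $u^2$ must equal $b$; hence $\psi_1(u)=d+cu+bu^2$, which is the form asserted in \eqref{form-psi-s2}. To determine $d$ I would match the remaining coefficients as polynomials in $\overline{z}$: using $\overline{\kappa(z)}\,\kappa(w)=1$ the $u^1$-relation simplifies to $\psi_1(w)\varphi'(w)=b+c\overline{z}+d\overline{z}^2$, and inserting the closed forms of $w$ and $\varphi'(w)$ turns this into the matching of a quadratic in $\overline{z}$. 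Each of the three coefficient comparisons returns the same relation $d=-\lambda b\overline{\lambda}^{-1}-\lambda c$, and a parallel check of the $u^0$-relation (the one for $A(z)$) shows it is then automatically satisfied, so $a,b,c$ stay free.

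I expect the main obstacle to be exactly this bookkeeping: isolating the right auxiliary identity for $1-\overline{\lambda}\,\overline{\varphi(z)}$, deducing $\overline{\varphi(\overline{\varphi(z)})}=z$ and $\overline{\kappa(z)}\kappa(w)=1$ from it, and then pushing the explicit rational-function computation of $A(z)$ and $B(z)$ through cleanly enough that the relation for $d$ falls out without sign or conjugation errors. The conceptual content is light; the difficulty is entirely in controlling the Möbius algebra and the interplay between holomorphic dependence in $u$ and anti-holomorphic dependence in $z$.
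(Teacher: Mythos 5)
Your proof is correct, but its second half takes a genuinely different route from the paper's. The paper's own argument also starts by setting $z=0$ in \eqref{important-2} (where $\overline{\varphi(0)}=\lambda$ and $\overline{\varphi(\lambda)}=0$ let Lemma \ref{key-lem} collapse the right-hand side onto $K_0=1$ and $K_0^{[1]}(u)=u$), which gives $\psi_0(u)=\psi_0(0)+\psi_1(\lambda)\varphi'(\lambda)u$; but instead of analyzing the identity for general $z$, it then makes a \emph{second} point evaluation at the other distinguished point $z=\lambda$ and simply solves for $\psi_1(u)$, obtaining
$$
\psi_1(u)=\psi_1(0)+\left[\dfrac{\psi_1(\lambda)}{\lambda(1-|\lambda|^2)}-\dfrac{\psi_1(0)}{\lambda}\right]u+\psi_1(\lambda)\varphi'(\lambda)u^2,
$$
from which the relation $d=-\lambda b\overline{\lambda}^{-1}-\lambda c$ follows by computing $\varphi'(\lambda)$. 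Thus the paper never needs the involutivity $\overline{\varphi(\overline{\varphi(z)})}=z$, the telescoping $\overline{\kappa(z)}\kappa(w)=1$, or any coefficient matching in $\overline{z}$. Your route keeps $z$ arbitrary, collapses the right-hand side to a combination of $K_z$ and $K_z^{[1]}$ (in effect re-deriving Proposition \ref{conj-D-c2} on kernel functions), and reads off everything from a polynomial identity in the two variables $u$ and $\overline{z}$. I checked the steps your plan relies on: the single identity $1-\overline{\lambda}\,\overline{\varphi(z)}=(1-|\lambda|^2)/(1-\lambda\overline{z})$ does determine $w=\overline{\varphi(z)}$ and hence yields both the involutivity and the $\kappa$-identities; the degree argument in $u$ forces $\psi_1(u)=d+cu+bu^2$ with leading coefficient $b$; and each coefficient comparison in $\overline{z}$ (in the $u^1$-equation, and also in the $u^0$-equation once the forms are substituted) returns exactly $d=-\lambda b\overline{\lambda}^{-1}-\lambda c$, so no extra constraints arise. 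What your approach buys: the consistency of the full system shows, as a byproduct, that any $a,b,c$ with this $d$ actually satisfy the kernel identity \eqref{important-2}, i.e.\ essentially the sufficiency that the paper obtains separately (via Proposition \ref{conj-D-c2} and Theorem \ref{adjoint-form}) when proving Theorem \ref{Cself-2}. What the paper's approach buys: brevity — two point evaluations and Lemma \ref{key-lem} suffice, with all the M\"obius algebra confined to evaluating $\kappa$, $\kappa'$, $\varphi'$ at $0$ and $\lambda$.
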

\begin{proof}
In \eqref{important-2}, we let $z=0$ to get
\begin{eqnarray*}
\psi_0(u)
&=&\overline{\beta\kappa(0)}\left[\psi_0(\lambda)\calJ_{\beta,\lambda}K_{\lambda}(u)+\psi_1(\lambda)\calJ_{\beta,\lambda}K_{\lambda}^{[1]}(u)\right]\\
&=&\overline{\kappa(0)}\left[\psi_0(\lambda)\kappa(\lambda)+\psi_1(\lambda)\kappa'(\lambda)+\psi_1(\lambda)\kappa(\lambda)\varphi'(\lambda)u\right]\\
&=&\psi_0(0)+\psi_1(\lambda)\varphi'(\lambda)u,
\end{eqnarray*}
where the second equality holds by Lemma \ref{key-lem}. For choosing $z=\lambda$, the identity \eqref{important-2} becomes
\begin{eqnarray*}
&&\dfrac{\psi_0(u)}{1-u\overline{\lambda}}+\dfrac{\psi_1(u)\overline{\lambda}}{(1-u\overline{\lambda})^2}\\
&&=\overline{\beta\kappa(\lambda)}\left[\psi_0(0)\calJ_{\beta,\lambda}K_{0}(u)+\psi_1(0)\calJ_{\beta,\lambda}K_0^{[1]}(u)\right]\\
&&=\overline{\kappa(\lambda)}\left[\psi_0(0)\kappa(0)K_\lambda(u)+\psi_1(0)\kappa'(0)K_\lambda(u)+\psi_1(0)\kappa(0)\varphi'(0)K_\lambda^{[1]}(u)\right],
\end{eqnarray*}
where the second equality uses Lemma \ref{key-lem}. Consequently, bearing in mind that $\psi_0(u)=\psi_0(0)+\psi_1(\lambda)\varphi'(\lambda)u$, we obtain
\begin{eqnarray*}
\psi_1(u)
&=&\psi_1(0)+\left[\dfrac{\psi_1(\lambda)}{\lambda(1-|\lambda|^2)}-\dfrac{\psi_1(0)}{\lambda}\right]u+\psi_1(\lambda)\varphi'(\lambda)u^2.
\end{eqnarray*}
\end{proof}

Now we make use of Lemma \ref{fsfhs} to establish the necessary condition for maximal differential operators of order $1$ to be $\calJ_{\beta,\lambda}$-selfadjoint.
\begin{prop}\label{=>s2}
Let $\calE_{\max}$ be a maximal differential operator of order $1$, induced by the holomorphic functions $\psi_0$ and $\psi_1$. If the identity $\calE_{\max}K_z=\calJ_{\beta,\lambda}\calE_{\max}^*\calJ_{\beta,\lambda}K_z$ holds for every $z\in\D$, then the symbols are of the forms \eqref{form-psi-s2}.
\end{prop}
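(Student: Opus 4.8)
The plan is to mimic the proof of Proposition \ref{=>-c1}: I would reduce the kernel identity in the hypothesis to the functional equation \eqref{important-2} and then invoke Lemma \ref{fsfhs}, which already packages the genuine analytic content. Concretely, I would evaluate both sides of $\calE_{\max}K_z=\calJ_{\beta,\lambda}\calE_{\max}^*\calJ_{\beta,\lambda}K_z$ at a point $u\in\D$ and check that the resulting scalar identity is precisely \eqref{important-2}.

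First I would record the left-hand side. Since $K_z(u)=(1-u\overline{z})^{-1}$ and $K_z'(u)=\overline{z}(1-u\overline{z})^{-2}$ (the prime denoting differentiation in $u$), the definition of $\calE_{\max}$ gives
$$\calE_{\max}K_z(u)=\dfrac{\psi_0(u)}{1-u\overline{z}}+\dfrac{\psi_1(u)\overline{z}}{(1-u\overline{z})^2},$$
which is exactly the left-hand side of \eqref{important-2}.

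Next I would compute the right-hand side in three passes. By Lemma \ref{key-lem}(1) the inner conjugation produces $\calJ_{\beta,\lambda}K_z=\beta\kappa(z)K_{\overline{\varphi(z)}}$, a scalar multiple of a kernel function. Applying Lemma \ref{bcajkf} with $m=0$ to the operator $\calE_{\max}$ shows that $K_{\overline{\varphi(z)}}\in\text{dom}(\calE_{\max}^*)$ and
$$\calE_{\max}^*K_{\overline{\varphi(z)}}=\overline{\psi_0(\overline{\varphi(z)})}\,K_{\overline{\varphi(z)}}+\overline{\psi_1(\overline{\varphi(z)})}\,K_{\overline{\varphi(z)}}^{[1]}.$$
Since $\calE_{\max}^*$ is linear I may pull the scalar $\beta\kappa(z)$ through it, and the final application of the anti-linear map $\calJ_{\beta,\lambda}$ then conjugates every scalar coefficient: the factors $\overline{\beta\kappa(z)}$, $\psi_0(\overline{\varphi(z)})$ and $\psi_1(\overline{\varphi(z)})$ emerge, while $\calJ_{\beta,\lambda}K_{\overline{\varphi(z)}}$ and $\calJ_{\beta,\lambda}K_{\overline{\varphi(z)}}^{[1]}$ are left untouched. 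Evaluating at $u$ yields exactly the right-hand side of \eqref{important-2}, so the hypothesis collapses to \eqref{important-2}, and Lemma \ref{fsfhs} delivers the forms \eqref{form-psi-s2}.

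The step demanding the most care is the bookkeeping in this second computation: one must correctly track how the anti-linearity of $\calJ_{\beta,\lambda}$ turns the coefficients $\overline{\psi_0(\overline{\varphi(z)})}$ and $\overline{\psi_1(\overline{\varphi(z)})}$ back into $\psi_0(\overline{\varphi(z)})$ and $\psi_1(\overline{\varphi(z)})$, and how $\beta\kappa(z)$ becomes $\overline{\beta\kappa(z)}$. I would also observe that, since the identity need only be read off on kernel functions, no control over the full domain of $\calE_{\max}$ is required here --- in contrast to what the sufficiency direction will demand, where Proposition \ref{conj-D-c2} is needed to identify $\calJ_{\beta,\lambda}\calE_{\max}\calJ_{\beta,\lambda}$ on the whole space.
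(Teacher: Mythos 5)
Your proposal is correct and follows essentially the same route as the paper's own proof: both compute $\calJ_{\beta,\lambda}\calE_{\max}^*\calJ_{\beta,\lambda}K_z$ via Lemma \ref{key-lem}(1), the $m=0$ case of Lemma \ref{bcajkf} (identity \eqref{T*K_z}), and the anti-linearity of $\calJ_{\beta,\lambda}$, then match this against the direct computation of $\calE_{\max}K_z(u)$ to recover \eqref{important-2} and invoke Lemma \ref{fsfhs}. Your bookkeeping of the conjugated scalars is exactly what the paper does.
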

\begin{proof}
By Lemmas \ref{key-lem}(1) and the identity \eqref{T*K_z}, we have
$$
\calE_{\max}^*\calJ_{\beta,\lambda}K_z=\beta\kappa(z)\calE_{\max}^*K_{\overline{\varphi(z)}}=\beta\kappa(z)\left[\,\overline{\psi_0(\overline{\varphi(z)})}K_{\overline{\varphi(z)}}+\overline{\psi_1(\overline{\varphi(z)})}K_{\overline{\varphi(z)}}^{[1]}\right],
$$
and hence
\begin{eqnarray*}
\calJ_{\beta,\lambda}\calE_{\max}^*\calJ_{\beta,\lambda}K_z
&=&\overline{\beta\kappa(z)}\left[\psi_0(\overline{\varphi(z)})\calJ_{\beta,\lambda}K_{\overline{\varphi(z)}}+\psi_1(\overline{\varphi(z)})\calJ_{\beta,\lambda}K_{\overline{\varphi(z)}}^{[1]}\right],
\end{eqnarray*}
where we use the anti-linearity of the conjugation $\calJ_{\beta,\lambda}$. A direct computation shows that
$$
\calE_{\max}K_z(u)=\dfrac{\psi_0(u)}{1-u\overline{z}}+\dfrac{\psi_1(u)\overline{z}}{(1-u\overline{z})^2}.
$$
Thus, the assumption $\calE_{\max}K_z=\calJ_{\beta,\lambda}\calE_{\max}^*\calJ_{\beta,\lambda}K_z$ gives the identity \eqref{important-2}, and so we can use Lemma \ref{fsfhs} to get the desired result.
\end{proof}

\subsection{Maximal domain}
With all preparation discussed in the preceding subsection, we can now state and prove the main result of this section.
\begin{thm}\label{Cself-2}
Let $\calJ_{\beta,\lambda}$ be the weighted composition conjugation given by \eqref{conj-c2}, and $\calE_{\max}$ the maximal differential operator of order $1$, induced by the holomorphic functions $\psi_0$ and $\psi_1$. The following assertions are equivalent.
\begin{enumerate}
\item The operator $\calE_{\max}$ is $\calJ_{\beta,\lambda}$-selfadjoint.
\item The domain $\text{dom}(\calE_{\max})$ is dense and $\calE_{
\max}^*\preceq\calJ_{\beta,\lambda}\calE_{\max}\calJ_{\beta,\lambda}$.
\item The symbols are of the forms \eqref{form-psi-s2}, that is
$$
\psi_0(u)=a+bu,\quad\psi_1(u)=d+cu+bu^2,\quad u\in\D,
$$
where $a,b,c\in\C$ and $d=-\lambda b\overline{\lambda}^{-1}-\lambda c$.
\end{enumerate}
\end{thm}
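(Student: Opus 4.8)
The plan is to prove the cyclic chain $(1)\Rightarrow(2)\Rightarrow(3)\Rightarrow(1)$, following the pattern already used for the $\calC_{\alpha,\beta}$-case in Theorem \ref{Cself-c1}. The implication $(1)\Rightarrow(2)$ is immediate: if $\calE_{\max}$ is $\calJ_{\beta,\lambda}$-selfadjoint, then by definition it is densely defined and $\calE_{\max}=\calJ_{\beta,\lambda}\calE_{\max}^*\calJ_{\beta,\lambda}$, so applying the involution $\calJ_{\beta,\lambda}$ to both sides gives $\calJ_{\beta,\lambda}\calE_{\max}\calJ_{\beta,\lambda}=\calE_{\max}^*$, whence in particular $\calE_{\max}^*\preceq\calJ_{\beta,\lambda}\calE_{\max}\calJ_{\beta,\lambda}$.

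For $(2)\Rightarrow(3)$ I would exploit that $\calJ_{\beta,\lambda}$ is involutive to rewrite the hypothesis $\calE_{\max}^*\preceq\calJ_{\beta,\lambda}\calE_{\max}\calJ_{\beta,\lambda}$ equivalently as $\calJ_{\beta,\lambda}\calE_{\max}^*\calJ_{\beta,\lambda}\preceq\calE_{\max}$. The crucial point is that every kernel function $K_z$ lies in $\text{dom}(\calJ_{\beta,\lambda}\calE_{\max}^*\calJ_{\beta,\lambda})$: by Lemma \ref{key-lem}(1) one has $\calJ_{\beta,\lambda}K_z=\beta\kappa(z)K_{\overline{\varphi(z)}}$, a scalar multiple of a kernel function, which belongs to $\text{dom}(\calE_{\max}^*)$ by Lemma \ref{bcajkf}. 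The restriction relation then forces $\calJ_{\beta,\lambda}\calE_{\max}^*\calJ_{\beta,\lambda}K_z=\calE_{\max}K_z$ for every $z\in\D$, which is precisely the hypothesis of Proposition \ref{=>s2}; that proposition returns the symbol forms \eqref{form-psi-s2}, i.e. assertion (3).

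The substance of the theorem is in $(3)\Rightarrow(1)$. Assume the symbols are of the form \eqref{form-psi-s2}. First, $\calE_{\max}$ is closed by Proposition \ref{T-closed}, and it is densely defined: since the symbols are polynomials, each $\calE_{\max}K_z$ is a rational function with poles only outside $\overline{\D}$, hence lies in $H^2$, so $K_z\in\text{dom}(\calE_{\max})$ for all $z$ and the span of kernel functions is dense. Next, because the symbols are exactly of the shape required by Theorem \ref{adjoint-form}, that theorem identifies $\calE_{\max}^*$ with the maximal operator $\calS_{\max}$ having symbols $\widehat{\psi_0}(z)=\overline{a}+z\overline{d}$ and $\widehat{\psi_1}(z)=\overline{b}+z\overline{c}+z^2\overline{d}$. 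On the other hand, since $\calJ_{\beta,\lambda}$ is an isometric bijection of $H^2$, a short domain-membership argument (using Proposition \ref{conj-D-c2} to transport the condition $E(\psi_0,\psi_1)\calJ_{\beta,\lambda}f\in H^2$ into $E(\psi_0^\odot,\psi_1^\odot)f\in H^2$) shows that $\calJ_{\beta,\lambda}\calE_{\max}\calJ_{\beta,\lambda}$ is the maximal operator attached to $E(\psi_0^\odot,\psi_1^\odot)$. To finish, it suffices to verify that $\psi_0^\odot=\widehat{\psi_0}$ and $\psi_1^\odot=\widehat{\psi_1}$ as functions on $\D$; then $\calJ_{\beta,\lambda}\calE_{\max}\calJ_{\beta,\lambda}=\calE_{\max}^*$, and applying the involution once more yields $\calE_{\max}=\calJ_{\beta,\lambda}\calE_{\max}^*\calJ_{\beta,\lambda}$, i.e. $\calJ_{\beta,\lambda}$-selfadjointness.

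I expect the main obstacle to be precisely this symbol-matching step. Substituting $\psi_0(u)=a+bu$ and $\psi_1(u)=d+cu+bu^2$ into the formulas of Proposition \ref{conj-D-c2} yields expressions in $\varphi(z)=\frac{\overline{\lambda}}{\lambda}\cdot\frac{\lambda-z}{1-z\overline{\lambda}}$ multiplied by the factors $(1-z\overline{\lambda})$ and $(1-z\overline{\lambda})^2$; one must check that the apparent poles at $z=1/\overline{\lambda}$ cancel and that the result collapses to the affine polynomial $\widehat{\psi_0}$ and the quadratic polynomial $\widehat{\psi_1}$. This is exactly where the constraint $d=-\lambda b\overline{\lambda}^{-1}-\lambda c$ does its work: clearing denominators over the common factor $1-z\overline{\lambda}$ and comparing the coefficients of $1$, $z$ and $z^2$ (using $|\lambda|^2=\lambda\overline{\lambda}$), that relation is what forces every coefficient to agree. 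A sanity check at $z=0$ already gives $\psi_0^\odot(0)=\overline{a}$ and $\psi_1^\odot(0)=\overline{b}$, confirming both that the constant terms match and that the stated constraint on $d$ is the correct one.
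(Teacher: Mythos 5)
Your proposal is correct and follows essentially the same route as the paper: the trivial implication $(1)\Rightarrow(2)$, then $(2)\Rightarrow(3)$ by reducing to the kernel-function identity of Proposition \ref{=>s2}, and finally $(3)\Rightarrow(1)$ by combining Proposition \ref{T-closed}, density via kernel functions, Theorem \ref{adjoint-form} for $\calE_{\max}^*$, and Proposition \ref{conj-D-c2} to identify $\calJ_{\beta,\lambda}\calE_{\max}\calJ_{\beta,\lambda}$ with the maximal operator of the transformed symbols, ending with the coefficient match $\psi_0^\odot=\widehat{\psi_0}$, $\psi_1^\odot=\widehat{\psi_1}$. You actually supply two details the paper compresses (the argument that the restriction relation in (2) yields equality on kernel functions, and the domain-transport argument making $\calJ_{\beta,\lambda}\calE_{\max}\calJ_{\beta,\lambda}$ maximal), and your identified use of the constraint $d=-\lambda b\overline{\lambda}^{-1}-\lambda c$ in the symbol-matching computation is exactly right.
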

\begin{proof}
It is obvious that $(1)\Longrightarrow(2)$, while Proposition \ref{=>s2} shows that $(2)\Longrightarrow(3)$. It rests to demonstrate that $(3)\Longrightarrow(1)$. Indeed, suppose that (3) holds. We see from Proposition \ref{T-closed}, that the operator $\calE_{\max}$ is closed. A direct computation can show that $\psi_0 K_z+\psi_1 K_z'$ is a linear combination of elements $K_z^{[\ell]}$, and hence it must belong to the domain $\text{dom}(\calE_{\max})$. This means that the operator $\calE_{\max}$ is densely defined. Proposition \ref{conj-D-c2}(2) gives $\calJ_{\beta,\lambda}\calE_{\max}\calJ_{\beta,\lambda}=\calS_{\max}$, where $\calS_{\max}$ is the maximal differential operator of order $1$, induced by
$$
\psi_0^\odot(z)=\overline{\psi_0(\overline{\varphi(z)})}+\dfrac{\lambda(1-z\overline{\lambda})\overline{\psi_1(\overline{\varphi(z)})}}{1-|\lambda|^2},\quad 
\psi_1^\odot(z)=-\dfrac{\lambda(1-z\overline{\lambda})^2\overline{\psi_1(\overline{\varphi(z)})}}{\overline{\lambda}(1-|\lambda|^2)},\quad z\in\D.
$$
A direct computation gives $\psi_1^\odot(z)=\overline{b}+z\overline{c}+z^2\overline{d}$ and $\psi_0^\odot(z)=\overline{a}+z\overline{d}$. Thus, by Theorem \ref{adjoint-form}, $\calE_{\max}^*=\calS_{\max}=\calJ_{\beta,\lambda}\calE_{\max}\calJ_{\beta,\lambda}$.
\end{proof}

\subsection{Non-maximal domain}
In the preceding subsection, we explored the structure of a maximal differential operator which is $\calJ_{\beta,\lambda}$-selfadjoint. It is natural to study a differential operator with an \emph{arbitrary domain}. It turns out that a $\calJ_{\beta,\lambda}$-selfadjoint differential operator must be necessarily maximal.
\begin{thm}\label{no-max-do-c2}
Let $\calE$ be a first-order differential operator induced by the holomorphic functions $\psi_0$ and $\psi_1$ (note that $\calE\preceq \calE_{\max}$). Then it is $\calJ_{\beta,\lambda}$-selfadjoint if and only if the following conditions hold.
\begin{enumerate}
\item $\calE=\calE_{\max}$.
\item The symbols are of the forms \eqref{form-psi-s2}, that is
$$
\psi_0(u)=a+bu,\quad\psi_1(u)=d+cu+bu^2,\quad u\in\D,
$$
where $a,b,c\in\C$ and $d=-\lambda b\overline{\lambda}^{-1}-\lambda c$.
\end{enumerate}
\end{thm}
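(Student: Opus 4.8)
The plan is to follow the same two-stage strategy used for Theorem~\ref{no-max-do}, transporting every step through the conjugation $\calJ_{\beta,\lambda}$ in place of $\calC_{\alpha,\beta}$. The sufficiency of conditions (1) and (2) is immediate: if $\calE=\calE_{\max}$ and the symbols have the form \eqref{form-psi-s2}, then Theorem~\ref{Cself-2} tells us directly that $\calE_{\max}$ is $\calJ_{\beta,\lambda}$-selfadjoint. So the entire content lies in the necessity, where I assume $\calE=\calJ_{\beta,\lambda}\calE^*\calJ_{\beta,\lambda}$ and must recover both the maximality of the domain and the polynomial form of the symbols.

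For necessity, the first step is to show that the \emph{maximal} operator $\calE_{\max}$ is already $\calJ_{\beta,\lambda}$-selfadjoint. Starting from $\calE\preceq\calE_{\max}$ I would pass to adjoints to obtain $\calE_{\max}^*\preceq\calE^*=\calJ_{\beta,\lambda}\calE\calJ_{\beta,\lambda}\preceq\calJ_{\beta,\lambda}\calE_{\max}\calJ_{\beta,\lambda}$, and then use the involutivity $\calJ_{\beta,\lambda}=\calJ_{\beta,\lambda}^{-1}$ to rewrite this as $\calJ_{\beta,\lambda}\calE_{\max}^*\calJ_{\beta,\lambda}\preceq\calE_{\max}$. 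To extract information from this restriction I would test it on kernel functions: by Lemma~\ref{key-lem}(1) the conjugation sends $K_z$ to a scalar multiple of $K_{\overline{\varphi(z)}}$, which by Lemma~\ref{bcajkf} (with $m=0$) lies in $\text{dom}(\calE_{\max}^*)$; since $\calJ_{\beta,\lambda}$ is bounded, this places every $K_z$ in $\text{dom}(\calJ_{\beta,\lambda}\calE_{\max}^*\calJ_{\beta,\lambda})$. The restriction then forces $\calJ_{\beta,\lambda}\calE_{\max}^*\calJ_{\beta,\lambda}K_z=\calE_{\max}K_z$ for every $z\in\D$, which is exactly the hypothesis of Proposition~\ref{=>s2}. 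That proposition yields condition (2), and feeding (2) back into Theorem~\ref{Cself-2} shows $\calE_{\max}$ is $\calJ_{\beta,\lambda}$-selfadjoint, i.e. $\calJ_{\beta,\lambda}\calE_{\max}\calJ_{\beta,\lambda}=\calE_{\max}^*$.

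With $\calE_{\max}$ now known to be $\calJ_{\beta,\lambda}$-selfadjoint, condition (1) follows from the chain of restrictions $\calJ_{\beta,\lambda}\calE\calJ_{\beta,\lambda}\preceq\calJ_{\beta,\lambda}\calE_{\max}\calJ_{\beta,\lambda}=\calE_{\max}^*\preceq\calE^*=\calJ_{\beta,\lambda}\calE\calJ_{\beta,\lambda}$, whose two ends coincide; hence every restriction is an equality, in particular $\calJ_{\beta,\lambda}\calE\calJ_{\beta,\lambda}=\calJ_{\beta,\lambda}\calE_{\max}\calJ_{\beta,\lambda}$, and applying $\calJ_{\beta,\lambda}$ on both sides gives $\calE=\calE_{\max}$, which is (1).

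I expect the only delicate point to be the domain bookkeeping in the first step — specifically, confirming that $K_z\in\text{dom}(\calJ_{\beta,\lambda}\calE_{\max}^*\calJ_{\beta,\lambda})$ so that the abstract restriction $\calJ_{\beta,\lambda}\calE_{\max}^*\calJ_{\beta,\lambda}\preceq\calE_{\max}$ can actually be evaluated at kernel functions. This is where the structural fact that $\calJ_{\beta,\lambda}$ carries kernels to kernels (Lemma~\ref{key-lem}(1)) is essential; everything else is a formal manipulation of restrictions together with the involutivity of the conjugation, exactly as in the $\calC_{\alpha,\beta}$ case.
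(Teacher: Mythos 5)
Your proposal is correct and follows exactly the route the paper intends: the paper's proof of this theorem simply defers to the argument of Theorem~\ref{no-max-do}, and you have carried out that same adaptation, replacing $\calC_{\alpha,\beta}$ by $\calJ_{\beta,\lambda}$ and invoking Proposition~\ref{=>s2} and Theorem~\ref{Cself-2} in place of their $\calC_{\alpha,\beta}$-counterparts. Your additional care in checking $K_z\in\text{dom}(\calJ_{\beta,\lambda}\calE_{\max}^*\calJ_{\beta,\lambda})$ via Lemma~\ref{key-lem}(1) is a detail the paper leaves implicit, but it is the same argument.
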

\begin{proof}
The proof is similar to those used in Theorem \ref{no-max-do} and it is left to the reader.
\end{proof}

\section{Hermiticity}
In this section, we present a concrete description of first-order differential operators which are hermitian on the Hardy space. Recall that a densely defined, closed linear operator $T$ is called \emph{hermitian} if $T=T^*$. Some authors prefer to use the term \emph{selfadjoint} instead of \emph{hermitian}, but this use can make confusing with the term \emph{$\calC$-selfadjoint} studied in the preceding sections.
\subsection{Auxiliary lemmas}
Before proceeding with the main results of this section, we require a few observations.
\begin{lem}\label{psi-form-ss-her}
Suppose that the holomorphic functions $\psi_0$ and $\psi_1$ in $\D$ satisfy
\begin{equation}\label{important-s-her}
\psi_0(u)+\dfrac{\psi_1(u)\overline{z}}{1-\overline{z}u}=\overline{\psi_0(z)}+\dfrac{\overline{\psi_1(z)}u}{1-\overline{z}u},\quad\forall z,u\in\D.
\end{equation}
Then
\begin{equation}\label{form-psi-s-her}
\psi_0(u)=a+bu,\quad\psi_1(u)=\overline{b}+cu+bu^2,
\end{equation}
where $b\in\C$ and $a,c\in\R$.
\end{lem}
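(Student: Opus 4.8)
The plan is to mirror the proof of Lemma \ref{psi-form-c1}: first pin down $\psi_0$ by specializing $z=0$, then feed the resulting form back into \eqref{important-s-her} to isolate an equation for $\psi_1$ alone, which I will resolve by a separation-of-variables argument. The extra feature here, absent in Lemma \ref{psi-form-c1}, is that I must also extract the reality of the constants $a$ and $c$, and I expect this to be the only genuinely delicate point.

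First I would set $z=0$ in \eqref{important-s-her}. Since $\overline{z}=0$ annihilates both quotient terms, the identity collapses to $\psi_0(u)=\overline{\psi_0(0)}+\overline{\psi_1(0)}\,u$, so $\psi_0$ is affine; writing $a=\overline{\psi_0(0)}$ and $b=\overline{\psi_1(0)}$ gives $\psi_0(u)=a+bu$. Putting $u=0$ into this relation yields $\psi_0(0)=\overline{\psi_0(0)}$, i.e. $a\in\R$, which already delivers one half of \eqref{form-psi-s-her}. I also record $\psi_1(0)=\overline{b}$ for later use.

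Next I would substitute $\psi_0(u)=a+bu$ (so, using $a\in\R$, $\overline{\psi_0(z)}=a+\overline{b}\,\overline{z}$) back into \eqref{important-s-her}, cancel the common constant $a$, and clear the denominator $1-\overline{z}u$. This produces the bilinear identity
$$
\psi_1(u)\overline{z}-\overline{\psi_1(z)}\,u=(\overline{b}\,\overline{z}-bu)(1-\overline{z}u),\qquad\forall z,u\in\D.
$$
Because $\psi_1$ is holomorphic, $\overline{\psi_1(z)}$ is a holomorphic function of $w:=\overline{z}$; setting $g(w):=\overline{\psi_1(z)}$ (so $g$ is holomorphic with $g(0)=b$) lets me regard $u$ and $w$ as two \emph{independent} variables over $\D$. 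Regrouping gives
$$
w\bigl[\psi_1(u)-\overline{b}-bu^2\bigr]=u\bigl[g(w)-b-\overline{b}\,w^2\bigr],
$$
whose left side is $w$ times a function of $u$ alone and whose right side is $u$ times a function of $w$ alone. A standard separation argument then forces each bracket to be a scalar multiple of its free variable, yielding $\psi_1(u)=\overline{b}+cu+bu^2$ for some $c\in\C$ and, correspondingly, $g(w)=b+cw+\overline{b}\,w^2$.

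Finally, to see $c\in\R$ I would exploit the internal consistency between $\psi_1$ and its own conjugate: evaluating $g(w)=\overline{\psi_1(\overline{w})}$ directly from the formula just found gives $g(w)=b+\overline{c}\,w+\overline{b}\,w^2$, while the separation step gave $g(w)=b+cw+\overline{b}\,w^2$; comparing the linear coefficients forces $c=\overline{c}$. This reality step is the one subtle point, since it does not arise from any specialization of $z$ or $u$ but from matching $\psi_1$ against the holomorphic extension of $\overline{\psi_1}$; everything else is routine algebra parallel to Lemma \ref{psi-form-c1}.
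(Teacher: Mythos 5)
Your proof is correct, and its skeleton matches the paper's: specialize $z=0$ in \eqref{important-s-her} to force $\psi_0(u)=a+bu$ with $a\in\R$ and $\psi_1(0)=\overline{b}$, then substitute this back and resolve the remaining identity for $\psi_1$. Where you genuinely diverge is in the resolution step. The paper divides the substituted identity by $\overline{z}$ and lets $z\to 0$; that single limit is the conjugate of a difference quotient of $\psi_1$, so it yields $\psi_1(u)=\overline{b}+\overline{\psi_1'(0)}\,u+bu^2$ in one stroke, and the reality of $c$ falls out implicitly because the linear coefficient produced by the limit is $\overline{\psi_1'(0)}$ while it must also equal $\psi_1'(0)$ (the paper does not spell this out). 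You instead clear the denominator, pass to the holomorphic reflection $g(w)=\overline{\psi_1(\overline{w})}$, and run the separation-of-variables identity $w\bigl[\psi_1(u)-\overline{b}-bu^2\bigr]=u\bigl[g(w)-b-\overline{b}w^2\bigr]$, which is purely algebraic, uses the full two-variable identity rather than local data at $z=0$, and determines $\psi_1$ and $g$ simultaneously; the reality of $c$ then requires, and receives, its own explicit argument by comparing the computed $g$ with the reflection of the computed $\psi_1$. The trade-off: the paper's route is shorter but leaves both the limit evaluation and the reality of $c$ to the reader, whereas yours is limit-free, every step is finite algebra, and the one delicate point, $c\in\R$, is isolated and justified rather than absorbed silently into "the desired conclusion." Both arguments are sound.
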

\begin{proof}
Indeed, letting $z=0$ in \eqref{important-s-her} gives $\psi_0(u)=\overline{\psi_0(0)}+\overline{\psi_1(0)}u$, and so $\psi_0(0)\in\R$. Substitute $\psi_0$ back into \eqref{important-s-her} to obtain
$$
\dfrac{\psi_1(u)}{1-u\overline{z}}-\psi_1(0)=\dfrac{1}{\overline{z}}\left[\dfrac{u\overline{\psi_1(z)}}{1-u\overline{z}}-u\overline{\psi_1(0)}\right].
$$
In the above identity, we let $z\to 0$ to get the desired conclusion.
\end{proof}

A necessary condition for maximal differential operators of order $1$ to be hermitian is provided by the following proposition.
\begin{prop}\label{=>s-her}
Let $\calE_{\max}$ be a maximal differential operator of order $1$, induced by the holomorphic functions $\psi_0$ and $\psi_1$. If the identity $\calE_{\max}K_z=\calE_{\max}^*K_z$ holds for every $z\in\D$, then the symbols are of the forms \eqref{form-psi-s-her}.
\end{prop}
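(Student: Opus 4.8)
The plan is to follow the template established in Propositions \ref{=>-c1} and \ref{=>s2}: evaluate both sides of the hypothesized identity on the kernel functions, reduce the resulting equation to the functional equation \eqref{important-s-her}, and then invoke Lemma \ref{psi-form-ss-her} to read off the explicit forms of the symbols. First I would compute the left-hand side directly. Since $K_z(u)=(1-\overline{z}u)^{-1}$ and $K_z'(u)=\overline{z}(1-\overline{z}u)^{-2}$, the formal differential expression gives
$$
\calE_{\max}K_z(u)=\frac{\psi_0(u)}{1-\overline{z}u}+\frac{\psi_1(u)\overline{z}}{(1-\overline{z}u)^2}.
$$

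For the right-hand side I would appeal to Lemma \ref{bcajkf}. Specializing \eqref{T*K_z} to $m=0$ yields $\calE_{\max}^*K_z=\overline{\psi_0(z)}K_z+\overline{\psi_1(z)}K_z^{[1]}$, so that
$$
\calE_{\max}^*K_z(u)=\frac{\overline{\psi_0(z)}}{1-\overline{z}u}+\frac{\overline{\psi_1(z)}u}{(1-\overline{z}u)^2}.
$$
Equating these two expressions for all $z,u\in\D$ and clearing the common factor $(1-\overline{z}u)$ produces exactly \eqref{important-s-her}. Lemma \ref{psi-form-ss-her} then delivers the desired forms \eqref{form-psi-s-her}, which completes the argument.

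I do not expect any genuine obstacle, since the action of $\calE_{\max}^*$ on the kernels is already recorded in Lemma \ref{bcajkf} and the algebraic reduction to the functional equation is purely mechanical. The only point requiring minor care is the bookkeeping of which variable is conjugated: the hermitian identity carries $\overline{\psi_j(z)}$ on the right, rather than a composition with $\beta\overline{z}$ or $\overline{\varphi(z)}$ as in the $\calC_{\alpha,\beta}$- and $\calJ_{\beta,\lambda}$-cases. Consequently the functional equation \eqref{important-s-her} is genuinely different from \eqref{important-c1} and \eqref{important-2}, and verifying that the equated kernel identity matches the hypotheses of Lemma \ref{psi-form-ss-her} is the one step that needs to be checked carefully.
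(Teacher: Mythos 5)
Your proposal is correct and follows essentially the same route as the paper's own proof: compute $\calE_{\max}K_z$ directly, obtain $\calE_{\max}^*K_z=\overline{\psi_0(z)}K_z+\overline{\psi_1(z)}K_z^{[1]}$ from the $m=0$ case of \eqref{T*K_z} in Lemma \ref{bcajkf}, reduce the equality of the two expressions to the functional equation \eqref{important-s-her}, and conclude via Lemma \ref{psi-form-ss-her}. No gaps; your remark about the conjugated variable appearing as $\overline{\psi_j(z)}$ rather than through a composition is exactly the distinction the paper's setup already encodes in \eqref{important-s-her}.
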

\begin{proof}
A direct computation shows that
$$
\calE_{\max}K_z(u)=\dfrac{\psi_0(u)}{1-u\overline{z}}+\dfrac{\psi_1(u)\overline{z}}{(1-u\overline{z})^2}.
$$
On the other hand, by \eqref{T*K_z},
$$
\calE_{\max}^*K_z(u)=\dfrac{\overline{\psi_0(z)}}{1-u\overline{z}}+\dfrac{u\overline{\psi_1(z)}}{(1-u\overline{z})^2}.
$$
Thus, identity $\calE_{\max}K_z=\calE_{\max}^*K_z$ is reduced to \eqref{important-s-her}, and hence by Lemma \ref{psi-form-ss-her}, we get the desired conclusion.
\end{proof}

\subsection{Maximal domain}
In this section, the conclusion in Proposition \ref{=>s-her} is also a sufficient condition for a maximal differential operator to be hermitian.

\begin{thm}\label{selfadjoint}
Let $\calE_{\max}$ be a maximal differential operator of order $1$, induced by the holomorphic functions $\psi_0$ and $\psi_1$. The following assertions are equivalent.
\begin{enumerate}
\item The operator $\calE_{\max}$ is hermitian.
\item The domain $\text{dom}(\calE_{\max})$ is dense and $\calE_{\max}^*\preceq \calE_{\max}$.
\item The symbols are of the forms \eqref{form-psi-s-her}, that is
$$
\psi_0(u)=a+bu,\quad\psi_1(u)=\overline{b}+cu+bu^2,
$$
where $b\in\C$ and $a,c\in\R$.
\end{enumerate}
\end{thm}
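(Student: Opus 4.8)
The plan is to run the same $(1)\Rightarrow(2)\Rightarrow(3)\Rightarrow(1)$ cycle already used for Theorems \ref{Cself-c1} and \ref{Cself-2}, treating hermiticity $\calE_{\max}=\calE_{\max}^*$ as the formal ``$\calC=\mathrm{Id}$'' analogue of $\calC$-selfadjointness. The implication $(1)\Rightarrow(2)$ is immediate from the definition: a hermitian operator is by hypothesis densely defined, and $\calE_{\max}^*=\calE_{\max}$ gives in particular $\calE_{\max}^*\preceq\calE_{\max}$.

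For $(2)\Rightarrow(3)$ I would restrict the relation $\calE_{\max}^*\preceq\calE_{\max}$ to the kernel functions. Density of $\text{dom}(\calE_{\max})$ makes $\calE_{\max}^*$ well defined, and Lemma \ref{bcajkf} (taken at $m=0$, where $K_w^{[0]}=K_w$) shows $K_z\in\text{dom}(\calE_{\max}^*)$ for every $z\in\D$. The restriction relation then forces $\calE_{\max}^*K_z=\calE_{\max}K_z$ on all of $\D$, which is exactly the hypothesis of Proposition \ref{=>s-her}; that proposition delivers the symbol form \eqref{form-psi-s-her} at once.

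The substance lies in $(3)\Rightarrow(1)$, and here I would deliberately avoid reading off hermiticity from the kernel-function identity: as the paper stresses, for unbounded operators agreement on kernels does not identify two operators, so I would instead reproduce the full-adjoint argument of Theorem \ref{Cself-c1}. First, $\calE_{\max}$ is closed by Proposition \ref{T-closed}. Next I would verify density of the domain exactly as in Theorem \ref{Cself-c1}: with $\psi_0,\psi_1$ polynomial as in \eqref{form-psi-s-her}, the function $\psi_0 K_z+\psi_1 K_z'$ is a rational function whose only (double) pole sits at $u=1/\overline{z}\notin\overline{\D}$, so it belongs to $H^2$; hence each $K_z\in\text{dom}(\calE_{\max})$ and the span of the kernel functions is dense. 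Finally I would apply Theorem \ref{adjoint-form}. Writing \eqref{form-psi-s-her} against the template $\psi_0(z)=a+zb$, $\psi_1(z)=d+zc+z^2b$ identifies the parameter $d$ with $\overline{b}$ (note that the $z^2$-coefficient of $\psi_1$ is already $b$, so \eqref{form-psi-s-her} does fit the template), and the theorem yields $\calE_{\max}^*=\calS_{\max}$ with symbols $\widehat{\psi_0}(z)=\overline{a}+zb$ and $\widehat{\psi_1}(z)=\overline{b}+z\overline{c}+z^2b$. The reality conditions $a,c\in\R$ collapse these to $\widehat{\psi_0}=\psi_0$ and $\widehat{\psi_1}=\psi_1$, whence $\calS_{\max}=\calE_{\max}$ and therefore $\calE_{\max}^*=\calE_{\max}$.

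The only real obstacle is conceptual rather than computational: one must not shortcut $(3)\Rightarrow(1)$ through Proposition \ref{=>s-her}, but route through the exact adjoint furnished by Theorem \ref{adjoint-form}, after first securing density so that $\calE_{\max}^*$ is meaningful. Once the adjoint formula is available, the check $\widehat{\psi_0}=\psi_0$, $\widehat{\psi_1}=\psi_1$ is short and transparent, and it makes visible why the constraints $a\in\R$, $c\in\R$, and $d=\overline{b}$ arise: they are precisely the conditions under which passing to the conjugate symbols of Theorem \ref{adjoint-form} acts as the identity.
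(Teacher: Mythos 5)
Your proposal is correct and follows essentially the same route as the paper: $(1)\Rightarrow(2)$ trivially, $(2)\Rightarrow(3)$ by evaluating the restriction relation on kernel functions and invoking Proposition \ref{=>s-her}, and $(3)\Rightarrow(1)$ by securing closedness (Proposition \ref{T-closed}) and density via the kernel functions, then computing the exact adjoint from Theorem \ref{adjoint-form}. Your explicit matching $d=\overline{b}$ and the check $\widehat{\psi_0}=\psi_0$, $\widehat{\psi_1}=\psi_1$ under $a,c\in\R$ merely spells out what the paper compresses into its final sentence.
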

\begin{proof}
It is obvious that $(1)\Longrightarrow(2)$, while Proposition \ref{=>s-her} shows that $(2)\Longrightarrow(3)$. It rests  to demonstrate that $(3)\Longrightarrow(1)$. Indeed, suppose that assertion (3) holds. We see from Proposition \ref{T-closed}, that the operator $\calE_{\max}$ is closed. A direct computation can show that $\psi_0 K_z+\psi_1 K_z'$ is a linear combination of elements $K_z^{[\ell]}$, and hence it must belong to the domain $\text{dom}(\calE_{\max})$. This means that the operator $\calE_{\max}$ is densely defined. By Proposition \ref{adjoint-form}, we have $\calE_{\max}^*=\calE_{\max}$, as desired.
\end{proof}

\subsection{Non-maximal domain}
The aim of this subsection is to shows that there is no nontrivial domain for a differential operator $\calE$ on which $\calE$ is hermitian.
\begin{thm}\label{no-max-do-self-her}
Let $\calE$ be a first-order differential operator induced by the holomorphic functions $\psi_0$ and $\psi_1$ (note that $\calE\preceq \calE_{\max}$). The operator $\calE$ is hermitian if and only if the following conditions hold.
\begin{enumerate}
\item $\calE=\calE_{\max}$.
\item The symbols are of the forms \eqref{form-psi-s-her}, that is
$$
\psi_0(u)=a+bu,\quad\psi_1(u)=\overline{b}+cu+bu^2,
$$
where $b\in\C$ and $a,c\in\R$.
\end{enumerate}
\end{thm}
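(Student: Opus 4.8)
The plan is to mirror the argument of Theorem \ref{no-max-do}, but with the conjugated operator $\calC_{\alpha,\beta}\calE^*\calC_{\alpha,\beta}$ replaced throughout by $\calE^*$ itself, since hermiticity is simply the condition $\calE=\calE^*$ with no conjugation in play. The sufficiency direction is immediate: if (1) and (2) hold, then $\calE=\calE_{\max}$ has symbols of the form \eqref{form-psi-s-her}, so Theorem \ref{selfadjoint} gives at once that $\calE_{\max}$, and hence $\calE$, is hermitian.

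For necessity I would assume $\calE$ is hermitian, so in particular $\calE$ is densely defined (which makes $\calE^*$ legitimate) and $\calE=\calE^*$. The first step is to transfer hermiticity up to the maximal operator. Starting from $\calE\preceq\calE_{\max}$ and using the inclusion-reversing property of adjoints, I obtain $\calE_{\max}^*\preceq\calE^*=\calE\preceq\calE_{\max}$, whence $\calE_{\max}^*\preceq\calE_{\max}$. This is the analogue of the step in Theorem \ref{no-max-do} that produced $\calC_{\alpha,\beta}\calE_{\max}^*\calC_{\alpha,\beta}\preceq\calE_{\max}$.

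Next I would test this inclusion on kernel functions. By Lemma \ref{bcajkf} (with $m=0$) every $K_z$ lies in $\text{dom}(\calE_{\max}^*)$, so the inclusion $\calE_{\max}^*\preceq\calE_{\max}$ forces $\calE_{\max}^*K_z=\calE_{\max}K_z$ for all $z\in\D$. Proposition \ref{=>s-her} then yields that the symbols have the form \eqref{form-psi-s-her}, which is exactly assertion (2); feeding this structural information back into Theorem \ref{selfadjoint} upgrades the one-sided inclusion $\calE_{\max}^*\preceq\calE_{\max}$ to the full equality $\calE_{\max}=\calE_{\max}^*$.

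Finally, assertion (1) is read off from the chain $\calE\preceq\calE_{\max}=\calE_{\max}^*\preceq\calE^*=\calE$, which pinches $\calE_{\max}$ between $\calE$ and itself and gives $\calE=\calE_{\max}$. I expect the only delicate points to be bookkeeping the directions of the adjoint inclusions and ensuring that the density of $\text{dom}(\calE)$ built into the definition of hermiticity is invoked so that $\calE^*$ is defined at every stage; there is no genuine analytic obstacle here, since the substantive input — the kernel-function computation of Lemma \ref{bcajkf} together with Proposition \ref{=>s-her} — is already available.
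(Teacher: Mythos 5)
Your proposal is correct and follows essentially the same route as the paper's own proof: sufficiency via Theorem \ref{selfadjoint}, then for necessity the chain $\calE_{\max}^*\preceq\calE^*=\calE\preceq\calE_{\max}$, evaluation on the kernel functions $K_z$ (which lie in $\text{dom}(\calE_{\max}^*)$ by Lemma \ref{bcajkf}), Proposition \ref{=>s-her} to get the symbol form, Theorem \ref{selfadjoint} to upgrade to $\calE_{\max}=\calE_{\max}^*$, and finally the pinching argument $\calE\preceq\calE_{\max}=\calE_{\max}^*\preceq\calE^*=\calE$. No gaps; your bookkeeping of density and adjoint-inclusion reversal matches the paper's argument exactly.
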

\begin{proof}
The sufficient condition holds by Theorem \ref{selfadjoint}. To prove the necessary condition, suppose that $\calE=\calE^*$. First, we show that $\calE_{\max}$ is also hermitian. Indeed, since $\calE\preceq \calE_{\max}$, we have $\calE_{\max}^*\preceq \calE^*=\calE\preceq \calE_{\max}$, which implies that $\calE_{\max}^*\preceq \calE_{\max}$. As proved in \eqref{T*K_z}, kernel functions always belong to $\text{dom}(\calE_{\max}^*)$, and so
$$
\calE_{\max}^*K_z =\calE_{\max}K_z,\quad\forall z\in\D.
$$
By Proposition \ref{=>s-her}, we obtain conclusion (2), and so by Proposition \ref{=>s-her}, the operator $\calE_{\max}$ is hermitian. The first conclusion follows from the following facts
$$
\calE\preceq \calE_{\max}=\calE_{\max}^*\preceq \calE^*=\calE.
$$
\end{proof}

The recent results reveal that the class of complex symmetric operators is large enough to cover all hermitian operators see for instance \cite{GPP}. It is natural to discuss how big is the class of complex symmetric differential operators characterized in Theorems \ref{Cself-c1}-\ref{Cself-2}. The following result gives the answer to this question.
\begin{cor}\label{cor-her-cso}
Every hermitian first-order differential operator is $\calC_{\alpha,\beta}$-sefladjoint, where
\begin{equation*}
\alpha=1,\quad\beta=
\begin{cases}
\overline{b}b^{-1},\quad\text{if $b\ne 0$,}\\
1,\quad\text{if $b=0$.}
\end{cases}
\end{equation*}
\end{cor}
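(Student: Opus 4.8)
The plan is to derive the corollary directly from the two structural characterizations already established, namely Theorem \ref{no-max-do-self-her} for hermiticity and Theorem \ref{Cself-c1} for $\calC_{\alpha,\beta}$-selfadjointness. First I would invoke Theorem \ref{no-max-do-self-her} to record that a hermitian first-order differential operator $\calE$ must coincide with $\calE_{\max}$ and must be induced by symbols of the form \eqref{form-psi-s-her}, that is $\psi_0(u)=a+bu$ and $\psi_1(u)=\overline{b}+cu+bu^2$ with $b\in\C$ and $a,c\in\R$. The entire task then reduces to showing that these same symbols already satisfy the form \eqref{form-psi-c1} characterizing $\calC_{\alpha,\beta}$-selfadjoint maximal operators, for the prescribed choice of $\beta$.

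The key step is a direct comparison of coefficients. Condition (3) of Theorem \ref{Cself-c1} requires $\psi_0(u)=a+ub$ and $\psi_1(u)=b\beta+uc+u^2b$ for constants in $\C$; note that $\alpha$ does not enter this condition at all, so the value $\alpha=1$ is merely a normalization and imposes nothing. The expression for $\psi_0$ matches at once, and the $u$- and $u^2$-coefficients of $\psi_1$ match as well, since $a,c\in\R\subset\C$ causes no difficulty. The only constraint that is not automatic concerns the constant term of $\psi_1$: I must check that the hermitian constant $\overline{b}$ equals the $\calC_{\alpha,\beta}$-form constant $b\beta$, i.e. that $b\beta=\overline{b}$.

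This is precisely where the stated choice of $\beta$ is forced. When $b\ne 0$, I would set $\beta=\overline{b}b^{-1}$; then $|\beta|=|\overline{b}|/|b|=1$, so $\beta\in\partial\D$ is a legitimate conjugation parameter, and $b\beta=b\cdot\overline{b}b^{-1}=\overline{b}$, giving exactly the required identity. When $b=0$ the constant term of $\psi_1$ degenerates to $0$ and the identity $b\beta=\overline{b}$ holds for every $\beta\in\partial\D$, so the convenient choice $\beta=1$ is admissible and keeps $\beta$ well-defined. In either case the symbols satisfy condition (3) of Theorem \ref{Cself-c1}, and I would conclude by the implication $(3)\Longrightarrow(1)$ of that theorem that $\calE=\calE_{\max}$ is $\calC_{\alpha,\beta}$-selfadjoint.

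I do not expect any genuine obstacle: the argument is a matching of the two normal forms, and the sole nontrivial point—the identity $b\beta=\overline{b}$—is handled by the very definition of $\beta$. The only care required is the routine bookkeeping of verifying $\beta\in\partial\D$ and treating the degenerate case $b=0$ separately so that $\beta$ remains well-defined.
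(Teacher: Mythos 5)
Your proposal is correct and is precisely the argument the paper intends (the corollary is stated without proof, as an immediate consequence of matching the normal form \eqref{form-psi-s-her} of Theorem \ref{no-max-do-self-her} against the form \eqref{form-psi-c1} in Theorem \ref{Cself-c1}): the only nontrivial point is the constant term identity $b\beta=\overline{b}$, which your choice of $\beta$ handles, including the degenerate case $b=0$.
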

 
\section{Point spectrum}
In this section, we concentrate only on the very restrictive category of differential operators generated by the formal expression
$$
E(\psi_0,\psi_1)f(z)=\psi_0(z)f(z)+\psi_1(z)f'(z),
$$
where the top coefficient $\psi_1$ has a zero.

In the proposition below we identify all possible eigenvalues of these operators (not necessarily maximal).
\begin{prop}\label{V-point-spec}
Let $\calE$ be a first-order differential operator (not necessarily maximal), induced by the symbols $\psi_0$ and $\psi_1$. If the top coefficient $\psi_1$ has a zero at $u$, then
$$
\sigma_p(\calE)\subseteq\left\{\psi_0(u)+\ell\psi_1'(u):\ell\in\N\right\}.
$$
\end{prop}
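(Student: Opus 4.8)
The plan is to start from an arbitrary eigenvalue $\mu\in\sigma_p(\calE)$ with a nonzero eigenfunction $f\in\text{dom}(\calE)\subseteq H^2$, so that the pointwise identity
$$
\psi_0(z)f(z)+\psi_1(z)f'(z)=\mu f(z),\qquad z\in\D,
$$
holds throughout $\D$ (here I use that $\calE f=E(\psi_0,\psi_1)f$ and that every element of $H^2$ is holomorphic). The whole argument then consists in localizing this identity at the zero $u$ of $\psi_1$ and reading off the admissible values of $\mu$.

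First I would simply evaluate the identity at $z=u$. Since $\psi_1(u)=0$, the derivative term drops out and I am left with $(\psi_0(u)-\mu)f(u)=0$. If $f(u)\neq 0$ this forces $\mu=\psi_0(u)$, i.e.\ the case $\ell=0$.

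The substantive case is $f(u)=0$. The key device is to record the exact order of vanishing: as $f$ is holomorphic and not identically zero, it has a zero of some finite order $\ell\geq 1$ at $u$, so I can write $f(z)=(z-u)^\ell g(z)$ with $g$ holomorphic and $g(u)\neq 0$. Similarly, from $\psi_1(u)=0$ I factor $\psi_1(z)=(z-u)h(z)$ with $h(u)=\psi_1'(u)$. Substituting both factorizations into the eigenvalue equation, every term acquires a factor $(z-u)^\ell$; cancelling it (valid for $z\neq u$, then extended to $u$ by continuity) reduces the equation to
$$
(\psi_0(z)-\mu)g(z)+\ell\,h(z)g(z)+(z-u)h(z)g'(z)=0.
$$
Evaluating at $z=u$ and dividing by $g(u)\neq 0$ gives $\mu=\psi_0(u)+\ell\,\psi_1'(u)$.

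I expect the only delicate point to be the bookkeeping in this second case: one must see that after cancelling $(z-u)^\ell$ the surviving contribution of the derivative term is exactly $\ell\,h(u)=\ell\,\psi_1'(u)$, the factor $\ell$ arising from differentiating $(z-u)^\ell$. This is precisely what produces the integer multiple in the conclusion. Combining the two cases, and recalling that $\N$ includes $0$, yields $\mu\in\left\{\psi_0(u)+\ell\psi_1'(u):\ell\in\N\right\}$, as claimed.
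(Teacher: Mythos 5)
Your proof is correct and takes essentially the same route as the paper's: fix an eigenvalue $\mu$ with eigenfunction $f$, case-split on whether $f(u)\neq 0$ (giving $\mu=\psi_0(u)$) or $f$ vanishes at $u$ to some finite order $\ell\geq 1$, and in the latter case use the order of vanishing to extract $\mu=\psi_0(u)+\ell\psi_1'(u)$. The only difference is cosmetic: the paper differentiates the eigenvalue equation $\ell$ times and applies the Leibniz rule, killing the lower-order terms via $g^{(j)}(u)=0$ for $j<\ell$, whereas you factor $f(z)=(z-u)^\ell g(z)$ and $\psi_1(z)=(z-u)h(z)$ and cancel $(z-u)^\ell$ --- both computations produce the same coefficient $\ell\psi_1'(u)$.
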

\begin{proof}
Let $\lambda\in\sigma_p(\calE)$. Then we can find $g\in\text{dom}(\calE)\setminus\{\mathbf{0}\}$ satisfying
\begin{equation}\label{eq-wed-1}
\lambda g(z)=\calE g(z)=\psi_0(z)g(z)+\psi_1(z)g'(z),\quad\forall z\in\D.
\end{equation}
The proof is separated into two possibilities as follows.

- If $g(u)\ne 0$, then $\lambda g(u)=\psi_0(u)g(u)+\psi_1(u)g'(u)=\psi_0(u)g(u)$, which yields $\lambda=\psi_0(u)$.

- If $g(\cdot)$ attains a zero at $u$ of order $\ell\geq 1$, then differentiating \eqref{eq-wed-1} $\ell$ times and evaluating it at the point $z=u$ gives
$$
\lambda g^{(\ell)}(u)=\sum_{j=0}^\ell\binom{\ell}{j} \psi_0^{(\ell-j)}(u)g^{(j)}(u)+\sum_{j=1}^{\ell+1}\binom{\ell}{j-1} \psi_1^{(\ell+1-j)}(u)g^{(j)}(u),
$$
which implies, as $g^{(j)}(u)=0$ for any $j\in\{0,\cdots,\ell-1\}$, that
$$
\lambda g^{(\ell)}(u)=\psi_0(u)g^{(\ell)}(u)+\sum_{j=\ell}^{\ell+1}\binom{\ell}{j-1} \psi_1^{(\ell+1-j)}(u)g^{(j)}(u).
$$
Since $\psi_1(u)=0$, the above equality is rewritten as
$$
\lambda g^{(\ell)}(u)=\psi_0(u)g^{(\ell)}(u)+\ell\psi_1'(u)g^{(\ell)}(u),
$$
which gives, as $g^{(\ell)}(u)\ne 0$, the explicit form of the eigenvalue.
\end{proof}

To find the point spectrum, we need to find which of the numbers discussed in Proposition \ref{V-point-spec} are really eigenvalues. To do that, we look at the action of the adjoint of a differential operator on the kernel functions. With the help of Lemma \ref{bcajkf} we can compute the point spectrum of the adjoint.
\begin{prop}\label{V*-point-spec}
Let $\calE$ be a first-order differential operator (not necessarily maximal). Suppose that $\calE$ is densely defined. If the top coefficient $\psi_1$ has a zero at $u$ of order $1$, then
$$\overline{\psi_0(u)}+\ell\overline{\psi_1'(u)},\quad\ell\in\N$$
are eigenvalues of $\calE^*$.
\end{prop}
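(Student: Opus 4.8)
The plan is to find eigenvectors of $\calE^*$ inside finite-dimensional spans of the kernel functions $K_u^{[0]},K_u^{[1]},\dots$ anchored at the zero $u$ of $\psi_1$, exploiting the explicit action of $\calE^*$ recorded in Lemma \ref{bcajkf}. The hypothesis that $\psi_1$ has a zero of order $1$ at $u$ means $\psi_1(u)=0$ and $\psi_1'(u)\neq 0$; the vanishing $\psi_1(u)=0$ is what produces an invariant subspace, and $\psi_1'(u)\neq 0$ is what makes the eigenvalues $\overline{\psi_0(u)}+\ell\,\overline{\psi_1'(u)}$ distinct.

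First I would substitute $w=u$ into \eqref{T*K_z}. Since $\psi_1(u)=0$, the top term $\overline{\psi_1(u)}K_u^{[m+1]}$ disappears, leaving
$$
\calE^* K_u^{[m]} = \overline{\psi_0^{(m)}(u)}\,K_u + \sum_{j=1}^m\left[\binom{m}{j}\overline{\psi_0^{(m-j)}(u)} + \binom{m}{j-1}\overline{\psi_1^{(m-j+1)}(u)}\right]K_u^{[j]},
$$
which, recalling $K_u=K_u^{[0]}$, is a linear combination of $K_u^{[0]},\dots,K_u^{[m]}$ only. By Lemma \ref{bcajkf} every $K_u^{[m]}$ lies in $\text{dom}(\calE^*)$, so for each $N\in\N$ the subspace $V_N=\Span\{K_u^{[0]},\dots,K_u^{[N]}\}$ satisfies $V_N\subseteq\text{dom}(\calE^*)$ and is invariant under $\calE^*$.

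Next I would observe that $\{K_u^{[0]},\dots,K_u^{[N]}\}$ is linearly independent, since $\langle f,K_u^{[m]}\rangle=f^{(m)}(u)$ and the functionals $f\mapsto f^{(m)}(u)$ are independent; hence $V_N$ is $(N+1)$-dimensional. In this basis the matrix of $\calE^*|_{V_N}$ is upper triangular, because $\calE^* K_u^{[m]}$ involves no $K_u^{[j]}$ with $j>m$, and its diagonal entry in column $m$ is the $j=m$ coefficient, namely
$$
\binom{m}{m}\,\overline{\psi_0(u)} + \binom{m}{m-1}\,\overline{\psi_1'(u)} = \overline{\psi_0(u)} + m\,\overline{\psi_1'(u)}.
$$
Therefore the characteristic polynomial of $\calE^*|_{V_N}$ is $\prod_{m=0}^{N}\bigl(x-\overline{\psi_0(u)}-m\,\overline{\psi_1'(u)}\bigr)$, so each value $\overline{\psi_0(u)}+m\,\overline{\psi_1'(u)}$ with $0\le m\le N$ is a root, hence an eigenvalue of the finite-dimensional operator $\calE^*|_{V_N}$ (the values being distinct because $\psi_1'(u)\neq 0$). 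A corresponding eigenvector lies in $V_N\subseteq\text{dom}(\calE^*)$ and is nonzero, so it is a genuine eigenvector of $\calE^*$. Given any $\ell\in\N$, choosing $N\ge\ell$ then yields $\overline{\psi_0(u)}+\ell\,\overline{\psi_1'(u)}\in\sigma_p(\calE^*)$, which is the assertion.

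I expect the only delicate point to be the bookkeeping: verifying that the invariance of $V_N$ is exactly the consequence of $\psi_1(u)=0$ (so that the term $\overline{\psi_1(u)}K_u^{[m+1]}$, which would otherwise push vectors out of $V_N$, drops), and that the $j=m$ coefficient extracted from \eqref{T*K_z} is precisely $\overline{\psi_0(u)}+m\,\overline{\psi_1'(u)}$. Once the triangular structure is established, the conclusion follows from elementary finite-dimensional linear algebra, with the order-$1$ hypothesis entering only to guarantee $\psi_1'(u)\neq 0$.
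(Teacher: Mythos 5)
Your proposal is correct, and its core is the same as the paper's: both exploit Lemma \ref{bcajkf} at $w=u$, note that $\psi_1(u)=0$ kills the term $\overline{\psi_1(u)}K_u^{[m+1]}$ so that the span of $K_u^{[0]},\dots,K_u^{[N]}$ is an invariant subspace of $\text{dom}(\calE^*)$, and read the numbers $\overline{\psi_0(u)}+m\,\overline{\psi_1'(u)}$ off the diagonal of the resulting upper triangular matrix. Where you differ is the finishing step, and your version is actually the cleaner one: you conclude directly that an eigenvector of the finite-dimensional restriction $\calE^*|_{V_N}$ is a genuine eigenvector of $\calE^*$ (it is nonzero and lies in $V_N\subseteq\text{dom}(\calE^*)$), which yields exactly the point-spectrum claim. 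The paper instead decomposes $H^2=\mathbb{K}_m\oplus\mathbb{K}_m^{\perp}$, writes $\calE^*$ as a block upper triangular operator, and invokes $\sigma(\calE^*)=\sigma(X_m)\cup\sigma(Z_m)$; besides requiring care for an unbounded operator, that route only asserts membership in the spectrum, not in the point spectrum, so your elementary linear-algebra ending is better matched to the statement being proved. One small remark: the distinctness of the diagonal entries (from $\psi_1'(u)\neq 0$) is not actually needed for your argument, since every diagonal entry of an upper triangular matrix is a root of its characteristic polynomial regardless of multiplicity; the order-one hypothesis only guarantees that the eigenvalues listed are pairwise distinct.
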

\begin{proof}
Let $m>1$. Let $\mathbb{K}_m$ be the algebraic linear span of $K_u, K_u^{[1]},...,K_u^{[m]}$. As the top coefficient $\psi_1$ has a zero at $u$ of order $1$, by \eqref{T*K_z}, the matrix representing of $\calE^*$ restricted to $\mathbb{K}_m$, is
\begin{displaymath}
X_m =
\left( \begin{array}{cccc}
\overline{\psi_0(u)} & * & \ldots & *\\
0 & \overline{\psi_0(u)+\psi_1'(u)} & \ldots & *\\
0 & 0 & \ldots & * \\
\vdots & \vdots & \ddots & \vdots\\
0 & 0 & \ldots & \overline{\psi_0(u)+m\psi_1'(u)}
\end{array} \right).
\end{displaymath}
Since any finite dimensional subspace is closed, so is $\mathbb{K}_m$. Hence, we can write $H^2=\mathbb{K}_m\oplus\mathbb{K}_m^\bot$. The block matrix of $\calE^*$ corresponding to this decomposition is
\begin{displaymath}
\left( \begin{array}{cc}
X_m & Y_m\\
0 & Z_m
\end{array} \right).
\end{displaymath}
Thus, $\sigma(\calE^*)=\sigma(X_m)\cup\sigma(Z_m)$, and hence the spectrum $\sigma(\calE^*)$ contains the points $\overline{\psi_0(u)+m\psi_1'(u)}$.
\end{proof}

A combination of Propositions \ref{V-point-spec}-\ref{V*-point-spec} gives the main result of this section. The proof is based on the argument that ``complex eigenvalues (if any) always exist in complex-conjugate pairs".
\begin{thm}\label{general}
Let $\calE$ be a first-order differential operator (not necessarily maximal) given by
$$
\calE f=\psi_0 f+\psi_1 f',\quad f\in\text{dom}(\calE),
$$
where $\psi_0,\psi_1$ are holomorphic functions. Suppose that $\calE$ is $\calC$-selfadjoint with respect to an arbitrary conjugation. If the top coefficient $\psi_1$ has a zero at $u$ of order $1$, then
$$\sigma_p(\calE)=\left\{\overline{\psi_0(u)+k\psi_1'(u)},\quad k\in\N\right\}.$$
\end{thm}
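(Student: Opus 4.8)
The strategy is to pin down $\sigma_p(\calE)$ by trapping it between the two one-sided bounds furnished by Propositions \ref{V-point-spec} and \ref{V*-point-spec}, using the hypothesis of $\calC$-selfadjointness only to transport eigenvalue information between $\calE$ and its adjoint. The first thing I would record is the elementary spectral symmetry behind the ``conjugate-pairs'' heuristic quoted before the statement: if $\calE=\calC\calE^*\calC$ with $\calC$ a conjugation, then $\mu\in\sigma_p(\calE^*)$ forces $\overline{\mu}\in\sigma_p(\calE)$. Indeed, from $\calE^*y=\mu y$ and the anti-linearity of $\calC$ one gets $\calE(\calC y)=\calC\calE^*\calC(\calC y)=\calC\calE^*y=\overline{\mu}\,\calC y$, with $\calC y\ne\mathbf 0$ since $\calC$ is isometric; as $\calE^*$ is itself $\calC$-selfadjoint the converse holds too, giving
$$
\sigma_p(\calE)=\overline{\sigma_p(\calE^*)}.
$$
Implicit here is that a $\calC$-selfadjoint operator is densely defined, which is exactly what I need in order to invoke Proposition \ref{V*-point-spec}.

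For the upper bound I would simply apply Proposition \ref{V-point-spec}: since $\psi_1$ has a (simple) zero at $u$, every eigenvalue of $\calE$ has the prescribed form, so
$$
\sigma_p(\calE)\subseteq\set{\psi_0(u)+k\psi_1'(u):k\in\N}.
$$
The reverse inclusion is where the adjoint enters. Because $\calE$ is densely defined and $\psi_1$ vanishes to order one at $u$, Proposition \ref{V*-point-spec} shows that each number $\overline{\psi_0(u)+k\psi_1'(u)}$, $k\in\N$, lies in $\sigma_p(\calE^*)$; feeding these through the correspondence $\sigma_p(\calE)=\overline{\sigma_p(\calE^*)}$ of the first step deposits the matching number into $\sigma_p(\calE)$. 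Combining the two inclusions closes the argument, the point being that all the analytic work is already packaged inside the two propositions, while $\calC$-selfadjointness supplies only the soft symmetry relating the point spectra of $\calE$ and $\calE^*$.

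The step I expect to be the most delicate is the conjugation bookkeeping in the last two paragraphs. One must track carefully how the bar introduced by passing through $\calC$ in the $\sigma_p(\calE)\leftrightarrow\sigma_p(\calE^*)$ transfer interacts with the bar already present in the eigenvalues produced by Proposition \ref{V*-point-spec}, so that the final description of $\sigma_p(\calE)$ is exactly the conjugated arithmetic progression in the statement and not its reflection; since $\psi_1$ has a \emph{simple} zero, $\psi_1'(u)\neq0$ and the progression is genuinely infinite, so the two candidate sets coincide only when their common initial point and step are matched correctly, and getting this matching right is the crux. Everything else is organizational.
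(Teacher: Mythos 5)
Your skeleton is exactly the paper's intended proof---the paper itself offers only the two-sentence sketch ``combine Propositions \ref{V-point-spec} and \ref{V*-point-spec} via conjugate pairs''---and both of your inclusions are derived correctly: $\calC$-selfadjointness gives $\sigma_p(\calE)=\overline{\sigma_p(\calE^*)}$, Proposition \ref{V-point-spec} gives $\sigma_p(\calE)\subseteq\{\psi_0(u)+k\psi_1'(u):k\in\N\}$, and Proposition \ref{V*-point-spec} together with the transfer puts each number $\psi_0(u)+k\psi_1'(u)$ into $\sigma_p(\calE)$. The genuine problem is the step you postpone to your last paragraph and call ``delicate matching'': it is not a bookkeeping issue, and it cannot be carried out. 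What your two inclusions prove is the \emph{barless} identity $\sigma_p(\calE)=\{\psi_0(u)+k\psi_1'(u):k\in\N\}$; the barred set in the statement is $\overline{\sigma_p(\calE)}=\sigma_p(\calE^*)$, and no choice of ``initial point and step'' makes $\{a+kb:k\in\N\}$ coincide with $\{\overline{a}+k\overline{b}:k\in\N\}$ unless the data happen to be self-conjugate.

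Indeed, the statement as printed is inconsistent with Proposition \ref{V-point-spec}, which it invokes: every eigenvalue must equal some $\psi_0(u)+\ell\psi_1'(u)$, so the barred description can hold only when that progression is closed under conjugation. A concrete counterexample to the barred version: take $\psi_0\equiv i$ and $\psi_1(z)=iz$. By Theorem \ref{Cself-c1} (with $a=c=i$, $b=0$) the maximal operator is $\calC_{1,1}$-selfadjoint, $\psi_1$ has a simple zero at $u=0$, and the monomials $z^k$ are eigenvectors with eigenvalues $i(k+1)$---the barless progression---whereas the barred progression $\{-i(k+1):k\in\N\}$ contains no eigenvalue at all. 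So the bars in the theorem are a misprint. Your argument is precisely the paper's argument for the corrected statement; the only repair your write-up needs is to replace the final paragraph's hope of ``matching'' with the plain conclusion $\sigma_p(\calE)=\{\psi_0(u)+k\psi_1'(u):k\in\N\}$, together with the remark that the stated bars should be deleted (equivalently, that the displayed set is $\sigma_p(\calE^*)$, not $\sigma_p(\calE)$).
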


\section*{Acknowledgments}
The author would like to thank the anonymous referee for valuable comments.

\bibliographystyle{plain}
\bibliography{refs}
\end{document}